\DeclareMathAlphabet{\mathpzc}{OT1}{pzc}{m}{it}
\theoremstyle{definition}
\newtheorem{defi}{Definition}[section]
\newtheorem{exam}[defi]{Example}
\theoremstyle{theorem}
\newtheorem{theo}[defi]{Theorem}
\newtheorem{prop}[defi]{Proposition}
\newtheorem{coro}[defi]{Corollary}
\newtheorem{lemm}[defi]{Lemma}
\newtheorem{conj}[defi]{Conjecture}
\theoremstyle{remark}
\newtheorem{rema}[defi]{Remark}
\title{Hypergroups and distance distributions of random walks on graphs}
\author[K. Endo]{Kenta Endo}
\author[I. Mimura]{Ippei Mimura}
\author[Y. Sawada]{Yusuke Sawada}
\subjclass[2010]{Primary 20N20, Secondary 05C81}
\keywords{hypergroups, distance distrubutions, random walks, graphs, Markov chains}
\address{K. Endo: Graduate school of mathematics, Nagoya University, Chikusaku, Nagoya, 464-8602, Japan}
\email{m16010c@math.nagoya-u.ac.jp}
\address{I. Mimura: Graduate school of mathematics, Nagoya University, Chikusaku, Nagoya, 464-8602, Japan}
\email{m16052v@math.nagoya-u.ac.jp}
\address{Y. Sawada: Graduate school of mathematics, Nagoya University, Chikusaku, Nagoya, 464-8602, Japan}
\email{m14017c@math.nagoya-u.ac.jp}
\begin{document}
\begin{abstract}
Wildberger's construction enables us to obtain a hypergroup from a special graph via random walks. We will give a probability theoretic interpretation to products on the hypergroup. The hypergroup can be identified with a commutative algebra whose basis is transition matrices. We will estimate the operator norm of such a transition matrix and clarify a relationship between their matrix products and random walks.
\end{abstract}
\maketitle
\section{Introduction}
The concept of hypergroup is the probability theoretic extension of the notion of locally compact group introduced by Dunkl \cite{dunk73}, Jewett \cite{jewe75} and Spector \cite{spec78}. We refer the reader to the monograph \cite{bloo-heye95} by Bloom-Heyer for details of the general theory of (locally compact) hypergroups. 
Discrete hypergroups are generalizations of discrete groups. 
As was the case with groups, 
we can completely determine structures of hypergroups of low order. 
Structures of finite hypergroups of low orders have been studied in \cite{wild02} and \cite{stmoy17} for examples. 
In this paper, we shall treat hermitian discrete hypergroups which are generalizations of $\mathbb{Z}/2\mathbb{Z}$.

In \cite{wild94} and \cite{wild95}, 
Wildberger has introduced a method to construct a hermitian finite hypergroup from a special graph by considering random walks on the graph. However, all graphs do not always produce hypergroups. 
A hypergroup is a $*$-algebra, which requires the product structure and its associativity. Note that the hypergroup derived from a graph becomes a commutative algebra. We may not be able to define a product of elements at all, 
and even if we can do the associativity or the commutativity may fail, depending on graphs.
In this paper, we treat only connected infinite graphs and connected finite graphs equipped with some condition which is weaker than the condition of self-centered treated in \cite{ikka-sawa19}.
These assumptions give the well-definedness of the product.
Hence it is important whether or not the product is commutative and associative to get hypergroups by this construction.
A non-associative hypergroup is called a pre-hypergroup.
In Wildberger's construction, a random walk on a graph is that a random walker jumps about distance which is not necessary the distance $1$, and we consider time evolutions of distances given by only two jumps from a fixed base point. 
Recently, Ikkai-Sawada \cite{ikka-sawa19} studied a condition of not necessarily finite graphs that one can get hermitian discrete hypergroups by Wildberger's construction. 
They showed that one can get hermitian discrete hypergroups from any distance regular graphs.

We shall give an outline of this paper. 
In Section \ref{Preliminary}, we will prepare the basic concept of graphs and recall Wildberger's construction.

We will explicitly define a probability space and random variables  which describe a time evolution of distances given by random walks on a Cayley graph in Section \ref{Time evolution of distances obtained from random walks}. We will also discuss the Markov property of the time evolution in the finite case.

In Section \ref{Hypergroup products}, we will find a probability theoretic interpretation of $m$-th products on the pre-hypergroup derived from a graph, related to random walks on the graph. For a graph $\Gamma$ which is not necessarily distance regular, let $H(\Gamma)$ be the pre-hypergroup with a basis $x_0,x_1,x_2,\ldots$. 
Then a product formed as $x_i\circ x_j$ has a probability theoretic meaning.
However, for $m>2$, the meaning of a product formed as $(( \cdots ((x_{i_1} \circ x_{i_2}) \circ x_{i_3}) \circ \cdots ) \circ x_{i_{m-1}})\circ  x_{i_m}$ has not been clarified yet.
We will show that the $m$-th product includes informations of probabilities with respect to $m$ times jumps when $\Gamma$ has  some symmetries which are weaker than the distance regularity.

We shall explain Section \ref{Realization of hypergroups as operator algebras}. 
Any discrete hermitian hypergroup is isomorphic to a matrix algebra whose basis is transition matrices. The matrix algebra associated with the hypergroup derived from a distance regular graph $\Gamma$ is closed to the Bose-Mesner algebra of $\Gamma$ in \cite{bose-mesn59}.
We can apply the construction of transition matrices to pre-hypergroup. 
We will estimate the operator norms of transition matrices obtained by the pre-hypergroups derived from a graph. When $\Gamma$ produces a hypergroup and has the symmetries the result of Section 4 implies that their matrix products describe a distribution of distances between a fixed vertex and a random vertex in each steps.

We prepare the notations used in this paper. 
Let $\mathbb{N}$ be the set of all positive integers, $\mathbb{N}_0=\mathbb{N}\cup\{0\}$, $\mathbb{Z}$ the set of all integers, $\mathbb{R}$ the set of all real numbers and $\mathbb{C}$ the set of all complex numbers. 
For a set $S$, 
let $\mathbb{C}S$ denote the free vector space of $S$ over the field $\mathbb{C}$.

\section{Preliminaries}\label{Preliminary}
In this section, we prepare definitions, notations and facts related with the graph theory and Wildberger's construction of hermitian (discrete) hypergroups from some graphs.
\subsection{Graphs}
We refer the reader to \cite{gods-royl01} for the general graph theory. 
Let $\Gamma$ is a graph with a vertex set $V$. 
When a base point $v_0\in
V$ is fixed the pair $(\Gamma,v_0)$ is called a pointed graph. For $v,w\in
V$, let $d(v,w)$ denote the distance between $v$ and $w$, that is, the length of the shortest paths from $v$ to $w$.
In particular, we denote by $| v | = d( v_0 , v )$ the distance between the base point $v_0$ and a vertex $v \in V$. We also define
\begin{align*}
&I(\Gamma) = I(\Gamma,v_0)=\{n\in\mathbb{N}_0\mid
| v |=n\ \mbox{for some }v\in
V\},\\
&M(\Gamma)=M(\Gamma,v_0)=\sup
I(\Gamma,v_0),\\
&S_{n}(w)=\{w'\in
V\mid
d(w,w')=n\}
\end{align*}
for each $n\in\mathbb{N}_0$ and $w\in
V$. In particular, the set $S_n(v_0)$ is sometimes denoted by $S_n$ simply. Note that $\sup_{n\in
I(\Gamma)}|S_n|<\infty$ if and only if $\sup_{n\in
I(\Gamma)}|S_n(v)|<\infty$ for all $v\in
V$. 
In this paper, assume that any graph $\Gamma$ is\\
~(i) simple, connected and locally finite,\\
~(ii) has at most countable vertices, and\\
~(iii) satisfies $S_{M(\Gamma)}(v) \neq \varnothing$ for any vertex $v$.\\
Here, the third condition in our assumption is weaker than self-centered.

We shall recall the notion of Cayley graphs and define some notations related with graphs. For the general theory of Cayley graphs, see \cite{kreb-shah11}. For a discrete group $G$ and a symmetric finite subset $S\subset
G$ not containing the unit element $e$ in $G$ and generating $G$, the Cayley graph ${\rm
Cay}(G,S)$ of the pair $(G,S)$ is a graph whose vertex set is $G$ and edges are defined by the follows: a vertex $v\in
G$ is adjacent to a vertex $w\in
G$ if $v^{-1}w\in
S$. The Cayley graph ${\rm
Cay}(G,S)$ is sometimes denoted by $G$ simply when we need not to specify the subset $S$. 
In this paper, we always assume any base point in a Cayley graph $G$ is the unit element $e$ in $G$.

Now, we shall introduce some symmetric conditions for graphs as follows:
%%%%%%%%%%%%%%%%%%%%%%%%%%%%%%%%%%%%%%%%%%%%%%%%%%%%%%%%%%%%%%%%%%%%%%%%%%%%%%
\begin{defi}
Let $\Gamma$ be a graph with a base point $v_0$. 
We define conditions
\begin{itemize}
\item[$(S1)$] the function $|S_i(\cdot)|$ is a constant for each $i\in
I(\Gamma,v_0)$, and
%we say $\Gamma$ is \underline{multi-regular}.
\item[$(S2)$] the function $| S_i(\cdot) \cap S_j(v_0) |$ is a constant on $S_k( v_0 )$ for each $i, j, k \in I(\Gamma)$.  
\end{itemize}
\end{defi}
%%%%%%%%%%%%%%%%%%%%%%%%%%%%%%%%%%%%%%%%%%%%%%%%%%%%%%%%%%%%%%%%%%%%%%%%%%%%%
\begin{defi}\label{d.r.g}
A graph $\Gamma$ with a vertex set $V$ is said to be distance regular if for every $i,j,k\in
\tilde{I}(\Gamma):=\{n\in\mathbb{N}_0\mid
d(v,w)=n\mbox{ for some }v,w\in
V\}$, the cardinality $|\{x\in
V\mid
d(v,x)=i,\ d(x,w)=j\}|$ is independent of the choice $v,w\in V$ with $d(v,w)=k$. 
The above cardinality is denoted by $Q(\Gamma)_{i,j}^k$.
\end{defi}
Note that the above definition of distance regular graphs is not the original definition but the one in the aspect of association schemes. 
It is known that they are equivalent (for example, see \cite[Proposition 2.4]{ikka-sawa19}). 
We refer the reader to \cite{bcn89} and \cite{dkt16} for the general theory of distance regular graphs. It is clear that any distance regular graph satisfies the conditions $(S1)$ and $(S2)$.
Of course, the condition $(S1)$ implies the assumption (iii).
%%%%%%%%%%%%%%%%%%%%%%%%%%%%%%%%%%%%%%%%%%%%%%%%%%%%%%%%%%%%%%%%%%%%%%%%%%%%%%%%
\begin{lemm}\label{cayleylemm}
Any Cayley graph $(G,S)$ {\rm(}with the base point $v_0=e${\rm)} satisfies the conditions $(S1)$ and
\begin{itemize}
\item[$(S3)$] $w\in S_i(v_0)$ if and only if $vw\in S_i(v)$ for all $v \in G$ and $i \in I(G)$.
\end{itemize}
\end{lemm}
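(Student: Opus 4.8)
The plan is to exploit the fact that left translations are graph automorphisms of a Cayley graph, and to deduce both $(S1)$ and $(S3)$ from the resulting translation-invariance of the distance function. The single key identity is $(gv)^{-1}(gw) = v^{-1}w$.

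First I would fix $g \in G$ and consider the left translation $L_g \colon G \to G$, $x \mapsto gx$, which is a bijection with inverse $L_{g^{-1}}$. To check that it is a graph automorphism, I would observe that the adjacency condition $v^{-1}w \in S$ defining the edges of ${\rm Cay}(G,S)$ is invariant under the simultaneous substitution $v \mapsto gv$, $w \mapsto gw$, since $(gv)^{-1}(gw) = v^{-1}g^{-1}gw = v^{-1}w$. Hence $v$ is adjacent to $w$ if and only if $gv$ is adjacent to $gw$, so $L_g$ preserves the edge relation in both directions. A bijection preserving adjacency carries paths to paths of equal length, so $L_g$ is an isometry: $d(gx, gy) = d(x, y)$ for all $x, y \in G$.

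Next I would establish $(S3)$. For $v \in G$ and $i \in I(G)$, applying the isometry $L_v$ gives $d(v, vw) = d(v \cdot e, v \cdot w) = d(e, w) = |w|$. Because the base point is $v_0 = e$, the statement $w \in S_i(v_0)$ means exactly $|w| = i$, and by the displayed equality this is equivalent to $d(v, vw) = i$, i.e. to $vw \in S_i(v)$. This is precisely $(S3)$.

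Finally, $(S1)$ follows at once from $(S3)$: the bijection $L_v$ of $G$ restricts, by the equivalence just proved, to a bijection from $S_i(v_0)$ onto $S_i(v)$, whence $|S_i(v)| = |S_i(v_0)|$ for every $v \in G$ and the function $|S_i(\cdot)|$ is constant on $G$. I do not expect any real obstacle here; the only point requiring care is the automorphism property of $L_g$, and that reduces entirely to the identity $(gv)^{-1}(gw) = v^{-1}w$ together with the fact that $L_g$ is a bijection so that adjacency is preserved in both directions.
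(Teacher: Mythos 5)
Your proof is correct and rests on the same key fact as the paper's: the identity $(gv)^{-1}(gw)=v^{-1}w$ shows left translation preserves the graph structure, so that $L_v$ restricts to a bijection from $S_i(v_0)$ onto $S_i(v)$, giving both $(S3)$ and $(S1)$. The paper carries this out by checking that translation preserves an explicit path condition characterizing $d(\cdot,\cdot)=i$, whereas you package the same mechanism as ``a graph automorphism is an isometry''---a cleaner but essentially equivalent route.
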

\begin{proof}
For $ l \geq 2$, the vertices $v$ and $w\in G$ is called to have the $l$-length path 
if there exists a tuple $\mathbf{w}_l=(w_1, w_2, \ldots , w_{l-1}) \in G^{l-1}$ satisfies the condition $P_l(w_0, w)$ :
\[
\begin{cases}
0 \leq j \leq l -1 \Rightarrow w_j^{-1} w_{j +1} \in S, \\
0 \leq j \leq k \leq l,\ k - j \geq 2 \Rightarrow w_j^{-1} w_k \notin S, \\
w_j \in G\setminus \{w_0, w_1, \ldots, w_{j-1}, w_l \} \quad \text{for any $j = 1,2, \ldots, l -1$}.
\end{cases}
\]
where we put $w_0 = v$ and $w_l = w$ for convenience.
\par
Fix $i\in I(G)$ and $v \in G$. 
It suffices to show that the map $F_v : S_i(v_0) \ni w \mapsto v w  \in S_i(v)$ is bijective.
The case $i =1$ is trivial.
First we check the well-definedness of $F_v$
Take $w \in S_i(v_0)$.
Then there exists $\mathbf{w}_i =(w_1, w_2, \ldots , w_{i-1}) \in G^{i-1}$ such that $\mathbf{w}_i$ satisfies $P(v_0, w)$, and 
it holds that, for any $j = 1, 2, \ldots, i -2$ and $\mathbf{w}_j \in G^{j-1}$, 
$\mathbf{w}_j$ does not satisfies $P_{j}(v_0, w)$.
Note that these two conditions are equivalent to $w \in S_i(v_0)$.
Now we find that $v \mathbf{w}_j = (v w_1, v w_2, \ldots, v w_{i-1})$ satisfies $P_i(v, v w)$ 
and that  it holds that,
for any $j = 1, 2, \ldots, i -2$ and $\mathbf{w}_j \in G^{j-1}$, 
$\mathbf{w}_j$ does not satisfies $P_{j}(v, v w)$, 
which give the well-difinedness of $F_v$.
For the bijectivity, we can easily check that the map $F_{v^{-1}} : S_i(v) \ni u \mapsto v ^{-1}u  \in S_i(v_0)$ is also well-defined and that this map is inverse mapping of $F_v$.
\end{proof}
%%%%%%%%%%%%%%%%%%%%%%%%%%%%%%%%%%%%%%%%%%%%%%%%%%%%%%%%%%%%%%%%%%%%%%%%%%%%%%%
In general, the condition $(S2)$ is weaker than the distance regularity, however their symmetry conditions are equivalent for a Cayley graph as follows:
%%%%%%%%%%%%%%%%%%%%%%%%%%%%%%%%%%%%%%%%%%%%%%%%%%%%%%%%%%%%%%%%%%%%%%%%%%%%%%
\begin{lemm}\label{cayleylemm2}
Let $G$ be a Cayley graph with the base point $e\in
G$. If $G$ satisfies the condition $(S2)$ then it is distance regular.
\end{lemm}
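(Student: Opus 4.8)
=== PROOF PROPOSAL ===

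The plan is to show that the condition $(S2)$ allows us to define the intersection numbers $Q(\Gamma)_{i,j}^k$ as in Definition \ref{d.r.g}, by exploiting the homogeneity $(S3)$ of Cayley graphs proved in Lemma \ref{cayleylemm}. The key observation is that for a Cayley graph, distance regularity requires the cardinality $|\{x \in V \mid d(v,x)=i,\ d(x,w)=j\}|$ to depend only on $k=d(v,w)$ and not on the particular pair $(v,w)$; whereas $(S2)$ only asserts that $|S_i(\cdot) \cap S_j(v_0)|$ is constant on each sphere $S_k(v_0)$ around the fixed base point. So the task is to upgrade the base-point-relative constancy of $(S2)$ to the fully homogeneous constancy demanded by distance regularity.

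First I would translate the quantity $|\{x \in V \mid d(v,x)=i,\ d(x,w)=j\}|$ into the intersection-of-spheres language: this set is exactly $S_i(v) \cap S_j(w)$, so its cardinality is $|S_i(v) \cap S_j(w)|$. Next, given an arbitrary pair $v,w$ with $d(v,w)=k$, I would use the left-translation $F_{v^{-1}}: u \mapsto v^{-1}u$ from Lemma \ref{cayleylemm}, which by $(S3)$ is a bijection carrying $S_i(v)$ onto $S_i(v_0)=S_i$ and carrying $S_j(w)$ onto $S_j(v^{-1}w)$. Since left-multiplication by $v^{-1}$ preserves distances (it is a graph automorphism of the Cayley graph), it restricts to a bijection
\[
S_i(v) \cap S_j(w) \;\longrightarrow\; S_i(v_0) \cap S_j(v^{-1}w),
\]
and note that $v^{-1}w \in S_k(v_0)$ because $d(v_0, v^{-1}w)=d(v,w)=k$. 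Hence $|S_i(v)\cap S_j(w)| = |S_i(v_0) \cap S_j(v^{-1}w)|$ with $v^{-1}w$ ranging over $S_k(v_0)$.

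Now condition $(S2)$ applies directly: it says $|S_i(\cdot) \cap S_j(v_0)|$ is constant on $S_k(v_0)$. After a harmless relabeling (or by symmetry of the intersection, $|S_i(v_0)\cap S_j(u)| = |S_j(v_0) \cap S_i(u)|$ composed with the reflection sending $u$ to a point at distance $k$), the value $|S_i(v_0)\cap S_j(u)|$ depends only on $i,j$ and on $k=|u|$, not on the choice of $u \in S_k(v_0)$. Combining the two displayed equalities, $|S_i(v)\cap S_j(w)|$ depends only on $i,j,k$, which is precisely the defining property of distance regularity; one then sets $Q(\Gamma)_{i,j}^k$ equal to this common value. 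Finally I would remark that $\tilde{I}(\Gamma)=I(\Gamma,v_0)$ for a Cayley graph (again a consequence of vertex-transitivity via $(S3)$), so that the index ranges in the two definitions match up.

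The main obstacle I anticipate is the bookkeeping in the second step: one must verify carefully that $v^{-1}w$ genuinely lies in $S_k(v_0)$ and that the bijection $F_{v^{-1}}$ really sends the intersection $S_i(v)\cap S_j(w)$ onto $S_i(v_0)\cap S_j(v^{-1}w)$ rather than merely mapping each sphere separately — this requires that $F_{v^{-1}}$ preserve \emph{both} distance constraints simultaneously, which holds because it is a genuine isometry of the graph. A secondary subtlety is matching $(S2)$'s one-sided formulation (intersection with the fixed $S_j(v_0)$) against the two moving spheres $S_i(v), S_j(w)$; the translation trick reduces one sphere to the base point, and the symmetry of the intersection cardinality handles the orientation, but this alignment should be stated explicitly to avoid an off-by-one in the roles of $i$ and $j$.
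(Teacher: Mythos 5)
Your proof is correct and follows essentially the same route as the paper's: left-translate so that one of the two spheres is centred at the base point, observe the translated centre lies in $S_k(v_0)$, and invoke $(S2)$. The only cosmetic difference is that the paper translates by $w^{-1}$ (landing directly on the form $|S_i(\cdot)\cap S_j(v_0)|$ appearing in $(S2)$), whereas your translation by $v^{-1}$ yields $|S_i(v_0)\cap S_j(v^{-1}w)|$ and forces the ``relabeling''; note that the clean justification there is simply $|S_i(v_0)\cap S_j(u)|=|S_j(u)\cap S_i(v_0)|$ together with $(S2)$ applied to the index triple $(j,i,k)$ --- the stronger identity $|S_i(v_0)\cap S_j(u)|=|S_j(v_0)\cap S_i(u)|$ you mention parenthetically is neither needed nor obvious.
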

\begin{proof}
Fix $i,j,k\in
\tilde{I}(G)$ and $g,g',h,h'\in
G$ with $d(g,h)=k=d(g',h')$. If we define the action $\varphi_x:G\ni
y\mapsto
xy\in
G$ for each $x\in
G$, by Lemma \ref{cayleylemm}, then we have $\varphi_{h^{-1}}(S_i(g)\cap
S_j(h))=S_i(h^{-1}g)\cap
S_j(e)$. By Lemma \ref{cayleylemm} again, we have $d(h^{-1}g,e)=d(g,h)=k=d(g',h')=d(h'^{-1}g',e)$, and hence the condition $(S2)$ implies that
\begin{align*}
&|S_i(g)\cap
S_j(h)|=|\varphi_{h^{-1}}(S_i(g)\cap
S_j(h))|=|S_i(h^{-1}g)\cap
S_j(e)|=|S_i(h'^{-1}g')\cap
S_j(e)|\\
&=|\varphi_{h'^{-1}}(S_i(g')\cap
S_j(h'))|=|S_i(g')\cap
S_j(h')|.
\end{align*}
It have been proved that $G$ is distance regular.
\end{proof}
%%%%%%%%%%%%%%%%%%%%%%%%%%%%%%%%%%%%%%%%%%%%%%%%%%%%%%%%%%%%%%%%%%%%%%%%%%%%
\begin{exam}
The $1$-dimensional lattice ${\rm Cay}(\mathbb{Z},\{\pm 1\})$ is distance regular, 
and the $2$-dimensional lattice ${\rm Cay}(\mathbb{Z}^2,\{ (\pm 1, 0), (0, \pm 1) \})$ satisfies the condition $(S1)$ and does not satisfy the condition $(S2)$.
\end{exam}
%%%%%%%%%%%%%%%%%%%%%%%%%%%%%%%%%%%%%%%%%%%%%%%%%%%%%%%%%%%%%%%%%%%%%%%%%%%%
\begin{exam}
For each $n\in\mathbb{N}$, the odd graph $\mathcal{O}_n$ with degree $n$ is a distance regular. It is known that $\mathcal{O}_n$ is not a Cayley graph if $n>2$ by \cite{gods80}.
\end{exam}
%%%%%%%%%%%%%%%%%%%%%%%%%%%%%%%%%%%%%%%%%%%%%%%%%%%%%%%%%%%%%%%%%%%%%%%%%%%%%%
\begin{exam}\label{prism}
The Cayley graph $\mathcal{P}_n={\rm
Cay}(\mathbb{Z}/n\mathbb{Z}\oplus\mathbb{Z}/2\mathbb,\{(\pm1,0),(0,1)\})$ called the $n$-gonal prism graph is distance regular if and only if $n=4$. For $n\neq4$, a pointed graph $\mathcal{P}_n$ with an arbitrary base point does not satisfy the condition $(S2)$.
\end{exam}
%%%%%%%%%%%%%%%%%%%%%%%%%%%%%%%%%%%%%%%%%%%%%%%%%%%%%%%%%%%%%%%%%%%%%%%%%%%%%%%%
\begin{exam}\label{tree}
The pair $(\mathcal{B},v_0)$ of the binary tree $\mathcal{B}$ and a base point $v_0$ in $\Gamma$ defined as {\rm
Figure} $1$, satisfies the condition $(S2)$ and does not satisfy the condition $(S1)$.
\[
\begin{xy}
(0,0)*{\circ}="A",
(-10,-6)*{\circ}="B",
(10,-6)*{\circ}="C",
(-15,-12)*{\circ}="D",
(-5,-12)*{\circ}="E",
(5,-12)*{\circ}="F",
(15,-12)*{\circ}="G",
{ "A" \ar @{-} "B" },
{ "A" \ar @{-} "C" },
{ "B" \ar @{-} "D" },
{ "B" \ar @{-} "E" },
{ "C" \ar @{-} "F" },
{ "C" \ar @{-} "G" },
(-15,-14)*{\vdots},
(-5,-14)*{\vdots},
(5,-14)*{\vdots},
(15,-14)*{\vdots},
(0,4)*{v_0},
(0,-22)*{\mbox{{\rm
Figure }}1}
\end{xy}
\]
\end{exam}
%%%%%%%%%%%%%%%%%%%%%%%%%%%%%%%%%%%%%%%%%%%%%%%%%%%%%%%%%%%%%%%%%%%%%%%%%%%%
\subsection{Hypergroups derived from graphs}
We shall recall Wildberger's construction of hypergroups from some graphs. We refer the reader to \cite{wild95} and \cite{ikka-sawa19} for details of the theory.

Let $\Gamma$ be a graph with a vertex set $V$ and $v_0\in
V$. 
Let $H(\Gamma,v_0)=\{x_i\}_{i\in
I(\Gamma,v_0)}$ with dummy symbols $x_i$ for $i\in
I(\Gamma,v_0)$. If $\Gamma$ is an infinite graph, then $I(\Gamma,v_0)=\mathbb{N}_0$. 
For $i,j,k\in I(\Gamma,v_0)$, we define
\begin{align}
p_{i,j}^{k}=\frac{1}{|S_{i} (v_0)|} \sum_{v \in S_{i} (v_0)}\frac{|S_{j}(v)\cap S_{k} (v_0)|}{|S_{j} (v)|}.\label{pijk}
\end{align}
The set $\{p_{i,j}^k\}_{k\in
I(\Gamma,v_0)}$ is the distribution of distances between the base point $v_0$ and a random vertex $w\in S_{j} (v)$ after a jump to a random vertex $v\in S_{i}(v_0)$. 
By our assumptions for graphs, 
the probability $p_{i,j}^{k}$ is well-defined for any $i,j,k\in
I(\Gamma,v_0)$ by \cite[Proposition 3.1]{ikka-sawa19}. 
Also, we define a product on the free vector space $\mathbb{C}H(\Gamma,v_0)$ by
\begin{align}
x_i\circ
x_j=\sum_{k\in
I(\Gamma,v_0)}p_{i,j}^kx_k.
\label{defiproduct}
\end{align}
for each $x_i,x_j\in
H(\Gamma,v_0)$. 
Note that $|\{k\in
I(\Gamma,v_0)\mid
p_{i,j}^k\neq0\}|<\infty$ for all $i,j\in
I(\Gamma,v_0)$ even if $\Gamma$ is infinite.

When the graph $\Gamma$ has a ``good symmetry'', defining $x_i^*=x_i$ for each $i\in
I(\Gamma)$, the triple $(H(\Gamma,v_0),\circ,*)$ forms a discrete hermitian  hypergroup in the following sense.
%%%%%%%%%%%%%%%%%%%%%%%%%%%%%%%%%%%%%%%%%%%%%%%%%%%%%%%%%%%%%%%%%%%%%%%%%%%
\begin{defi}
Let $H=\{x_i\}_{i\in
I(H)}$ be a countable set whose elements are parametrized by a index set $I(H)=\{0,\ldots,N\}$ for some $N\in\mathbb{N}_0$ or $\mathbb{N}_0$. 
Suppose $\circ$ and $*$ are a binary operation and an involution, respectively, on the free vector space $\mathbb{C}H$.
We give the following three definitions :\\
$\bullet$ The triple $(H,\circ,*)$ is called a discrete hypergroup if the following conditions are satisfied.
\begin{enumerate}
\item
The triple $(\mathbb{C}H,\circ,*)$ is a $*$-algebra with the unit $x_0\in
H$.
\item
For $i,j\in I(H)$, if $x_i\circ x_j=\sum_{k=0}^mp_{i,j}^kx_k$ and $p_{i,j}^k\in\mathbb{C}\ (k=0,\ldots,m)$, then we have $p_{i,j}^k\geq0$ for all $k=0,\ldots,m$ and $\sum_{k=0}^mp_{i,j}^k=1$.
\item For all $i,j\in I(H)$, 
one has $p_{i,j}^0\neq0$ if and only if $x_i=x_j^*$.
\item The restriction $*|_H$ maps onto $H$.
\end{enumerate}
We call the above $\{p_{i,j}^k\}_{i,j,k\in I(H)}$ the structure constants of $H$.\\
$\bullet$ The hypergroup $(H,\circ,*)$ is said to be 
\begin{itemize}
\item[-] finite if $I(H)$ is finite,
\item[-] commutative if $(\mathbb{C}H, \circ)$ is a commutative algebra, 
\item[-] hermitian if the restriction $*|_H$ is the identity map.
\end{itemize}
$\bullet$ In this paper, the pair $(H,\circ)$ is called a pre-hypergroup if $(\mathbb{C}H,\circ)$ is an algebra, which may fail the associativity and satisfies $(2)$ and
\begin{enumerate}[]
\item
\ \ \ (3')\  For all $i,j\in I(H)$, 
one has $p_{i,j}^0\neq0$ if and only if $x_i=x_j$.
\end{enumerate}
In this paper, a hypergroup $(H,\circ,*)$ or pre-hypergroup $(H,\circ)$ is denoted by $H$ simply. 
\end{defi}

We refer the reader to \cite{lass05} for details of the general theory of discrete commutative hypergroups. 
Note that a discrete hermitian hypergroup is automatically commutative.
We only treat discrete hermitian hypergroups and pre-hypergroups. Now, we note that, 
for any graph $\Gamma$ with a base point,
one can get a pre-hypergroup $H(\Gamma)$,
and it becomes a hypergroup if and only if
it holds that the conditions of the associativity and the commutativity:
\[
\sum_{l\in I(\Gamma)}p_{h,i}^lp_{l,j}^k
=\sum_{l\in I(\Gamma)}p_{i,j}^lp_{h,l}^k,\ p_{i,j}^k=p_{j,i}^k
\]
for all $h,i,j,k\in
I(\Gamma)$.
Also, any hypergroups derived from infinite graphs are polynomial hypergroups in the sense of \cite{lass83}. 

\begin{rema}
Our construction of hypergroups is slightly extension of the one refered in \cite{ikka-sawa19} in which we treat only self-centered graphs. Thanks to our construction, we can sometimes get a hypergroup from a graph equipped with few symmetries as {\rm
Example \ref{allnotexample}}.
\end{rema}

All graphs do not always produce hypergroups, however we have a sufficient condition of graphs for producing hypergroups as follows:
%%%%%%%%%%%%%%%%%%%%%%%%%%%%%%%%%%%%%%%%%%%%%%%%%%%%%%%%%%%%%%%%%%%%%%%%%%%%%%%%
\begin{theo}{\rm(\cite[Theorem 3.3]{ikka-sawa19})}\label{ikka-sawadr}
If $(\Gamma,v_0)$ is a pointed distance regular graph, then $H(\Gamma,v_0)$ is a hermitian discrete hypergroup with the structure constants $\{p_{i,j}^k\}_{i,j,k\in
I(\Gamma,v_0)}$. Moreover, the structure is independent of the choice of $v_0$.
\end{theo}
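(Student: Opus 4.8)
The plan is to verify the hypergroup axioms for $(H(\Gamma,v_0),\circ,*)$ with $x_i^*=x_i$, the decisive step being associativity. First I would exploit distance regularity to put the structure constants \eqref{pijk} into closed form. Condition $(S1)$, which distance regular graphs satisfy, makes the valency $|S_j(v)|=:k_j$ depend only on $j$, and for $v\in S_i(v_0)$ the count $|S_j(v)\cap S_k(v_0)|=|\{x: d(v,x)=j,\ d(x,v_0)=k\}|$ is exactly the intersection number $Q(\Gamma)_{j,k}^i$, independent of the chosen $v$. Substituting these into \eqref{pijk} collapses the average over $S_i(v_0)$ and gives
\[
p_{i,j}^k=\frac{Q(\Gamma)_{j,k}^i}{k_j}.
\]
Since the right-hand side involves only the intrinsic scheme parameters $Q(\Gamma)_{j,k}^i$ and $k_j$, and since $I(\Gamma,v_0)=\tilde{I}(\Gamma)$ for a distance regular graph, the constants do not depend on $v_0$; this settles the final ``independence of the base point'' assertion once the hypergroup axioms are in place.

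Next I would clear the elementary axioms from the closed form. Non-negativity of $p_{i,j}^k$ is immediate, and $\sum_k p_{i,j}^k=1$ follows either from the probabilistic reading of $\{p_{i,j}^k\}_k$ or from $\sum_k Q(\Gamma)_{j,k}^i=k_j$, yielding condition $(2)$. Because $*|_H$ is the identity it is trivially onto, giving $(4)$, and the hermitian property is a legitimate $*$-algebra involution precisely because the product is commutative, which I read off from the symmetry $Q(\Gamma)_{i,j}^k=Q(\Gamma)_{j,i}^k$ obtained by swapping the two endpoints. The unit axiom reduces to $Q(\Gamma)_{j,k}^0=\delta_{jk}k_j$ and $Q(\Gamma)_{0,k}^i=\delta_{ik}$, whence $x_0\circ x_j=x_j$ and $x_i\circ x_0=x_i$; and condition $(3)$ reduces to $Q(\Gamma)_{j,0}^i=\delta_{ij}$, so that $p_{i,j}^0=\delta_{ij}/k_j$ is nonzero exactly when $i=j=j^*$.

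The main obstacle is associativity, which I would obtain by identifying $\mathbb{C}H(\Gamma,v_0)$ with the Bose--Mesner algebra. Let $A_i$ be the distance-$i$ adjacency matrix, $(A_i)_{v,w}=1$ iff $d(v,w)=i$; then $A_iA_j=\sum_k Q(\Gamma)_{i,j}^k A_k$. The double-counting relation $k_i\,Q(\Gamma)_{j,k}^i=k_k\,Q(\Gamma)_{i,j}^k$ rewrites the structure constants as $p_{i,j}^k=\frac{k_k}{k_ik_j}Q(\Gamma)_{i,j}^k$, and a short check then shows that the linear map $x_i\mapsto k_i^{-1}A_i$ carries $\circ$ to matrix multiplication. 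As it sends the basis $\{x_i\}$ to the linearly independent family $\{k_i^{-1}A_i\}$, it is an algebra isomorphism onto its image, so associativity and commutativity of $(\mathbb{C}H,\circ)$ descend from those of matrix multiplication. The one point I would treat with care is the well-definedness of the possibly infinite matrix products: local finiteness makes each $A_i$ row- and column-finite, so every entry of $A_iA_j$ and of the triple products is a finite sum, and the usual associativity argument goes through for infinite $\Gamma$ as well.
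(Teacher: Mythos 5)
Your argument is correct, but there is nothing in the paper to compare it against: the statement is imported verbatim from \cite[Theorem 3.3]{ikka-sawa19} and no proof is reproduced here. What you write is consistent with what the paper does record elsewhere: your closed form $p_{i,j}^k=Q(\Gamma)_{j,k}^i/k_j$ is exactly the formula stated without proof in the remark following Example \ref{allnotexample} (with $k_j=Q(\Gamma)_{j,j}^0$), and your map $x_i\mapsto k_i^{-1}A_i$ is the Bose--Mesner realization that the introduction and Section \ref{Realization of hypergroups as operator algebras} allude to; this is the standard route and it works. Two points deserve tightening. First, commutativity: from the form $p_{i,j}^k=Q(\Gamma)_{j,k}^i/k_j$, the symmetry $Q(\Gamma)_{i,j}^k=Q(\Gamma)_{j,i}^k$ alone does not yield $p_{i,j}^k=p_{j,i}^k$; you also need the double-counting identity $k_i\,Q(\Gamma)_{j,k}^i=k_k\,Q(\Gamma)_{i,j}^k$, which you do state later for the Bose--Mesner step --- invoke it at that earlier point, or simply deduce commutativity from $A_iA_j=A_jA_i$. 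Second, the base-point independence of the index set: you should justify that $I(\Gamma,v_0)=\tilde I(\Gamma)$ for every $v_0$, which follows because $|S_n(v_0)|=Q(\Gamma)_{n,n}^0>0$ whenever some pair of vertices is at distance $n$ (i.e.\ distance regular graphs are self-centered); together with the fact that the $p_{i,j}^k$ are expressed purely in terms of the intersection numbers, this gives the final assertion. Your treatment of the infinite case via row- and column-finiteness of the $A_i$ is a legitimate way to get well-definedness and associativity of the matrix products, so with the two points above made explicit the proof is complete.
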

%%%%%%%%%%%%%%%%%%%%%%%%%%%%%%%%%%%%%%%%%%%%%%%%%%%%%%%%%%%%%%%%%%%%%%%%%%%%%%%%
%%%%%%%%%%%%%%%%%%%%%%%%%%%%%%%%%%%%%%%%%%%%%%%%%%%%%%%%%%%%%%%%%%%%%%%%%%%%%%%%
\begin{exam}{\rm(\cite[Corollary 3.8.]{ikka-sawa19})}\label{freegroup}
A typical example of graphs producing a hypergroup is the $1$-dimensional lattice ${\rm
Cay}(\mathbb{Z},\{\pm1\})$ and the hypergroup $H(\mathbb{Z})$ has the structure given by $x_i\circ
x_j=\frac{1}{2}x_{|i-j|}+\frac{1}{2}x_{i+j}$ for each $i,j\in\mathbb{N}_0$. The hypergroup $H(\mathbb{Z})$ is the polynomial hypergroup with respect to the Chebyshev polynomials.

In general, let $F_n$ be the $n$-free group with the generator $A =\{ a_1, a_2, \ldots, a_n \}$.
The Cayley graph ${\rm Cay}(F_n,A\cup A^{-1})$ is distance regular, and hence it produces a hypergroup
\end{exam}
%%%%%%%%%%%%%%%%%%%%%%%%%%%%%%%%%%%%%%%%%%%%%%%%%%%%%%%%%%%%%%%%%%%%%%%%%%%%%
%%%%%%%%%%%%%%%%%%%%%%%%%%%%%%%%%%%%%%%%%%%%%%%%%%%%%%%%%%%%%%%%%%%%%%%%%%%%
\begin{exam}
The $2$-dimensional lattice$\mathbb{Z}^2$ does not produce a hypergroup. Indeed, we can check that $(x_1\circ
x_2)\circ
x_3\neq
x_1\circ(x_2\circ
x_3)$. The binary tree $\mathcal{B}$ with the base point $v_0$ defined in {\rm
Example} \ref{tree} does not produce a hypergroup. Indeed, the commutativity fails.
\end{exam}
%%%%%%%%%%%%%%%%%%%%%%%%%%%%%%%%%%%%%%%%%%%%%%%%%%%%%%%%%%%%%%%%%%%%%%%%%%%%%%%
Note that the distance regularity and the conditions $(S1),(S2)$ are not necessary conditions for producing hypergroups as the following examples.
%%%%%%%%%%%%%%%%%%%%%%%%%%%%%%%%%%%%%%%%%%%%%%%%%%%%%%%%%%%%%%%%%%%%%%%%%%%%%%
%%%%%%%%%%%%%%%%%%%%%%%%%%%%%%%%%%%%%%%%%%%%%%%%%%%%%%%%%%%%%%%%%%%%%%%%%%%%%%%
\begin{exam}{\rm(\cite[Subsection 4.2]{ikka-sawa19})}\label{someg}
\begin{enumerate}[{\rm(1)}]
\item
For $n\neq
4$, the $n$-gonal prism graph $\mathcal{P}_n$ in {\rm
Example \ref{prism}} is not distance regular, 
however it produces a hypergroup.
\item
The complete bipartite graph $\mathcal{K}_{n,m}$ with partitions $(n,m)$ produces a hypergroup for each $n,m\in\mathbb{N}$. For example, $\mathcal{K}_{2,3}$ is drown as {\rm
Figure 2}. The graph $\mathcal{K}_{n,m}$ is not regular if and only if $n\neq
m$ and then a pointed graph $(\mathcal{K}_{n,m},v_0)$ with an arbitrary base point $v_0$ satisfies the condition $(S2)$.
\item
An important example of graph producing a hypergroup is one drown as {\rm
Figure $3$} which is not distance regular and satisfies the condition $(S1)$. This graph $\Gamma$ produces different hypergroups depends on base points $w_0,w_0'$. Note that $(\Gamma,w_0)$ satisfies the condition $(S2)$ and $(\Gamma,w_0')$ does not satisfy it.
\end{enumerate}
\end{exam}
\begin{exam}\label{allnotexample}
A pointed graph $(\Gamma,u_0)$ drawn as {\rm
Figure $4$} is not a Cayley graph, does not satisfy $(S1)$ and $(S2)$,
and produces a hypergroup.
\end{exam}
\[
\begin{xy}
(0,0)*{\circ}="A",
(20,5)*{\circ}="B",
(0,10)*{\circ}="C",
(20,-5)*{\circ}="D",
(20,15)*{\circ}="E",
(60,15)*{\circ}="A'",
(50.5,8.5)*{\circ}="B'",
(69.5,8.5)*{\circ}="C'",
(47.5,0)*{\circ}="D'",
(72.5,0)*{\circ}="E'",
(54.5,-6)*{\circ}="F'",
(65.5,-6)*{\circ}="G'",
(110,15)*{\circ}="A1",
(110,5)*{\circ}="B1",
(100,10)*{\circ}="C1",
(120,10)*{\circ}="D1",
{ "A1" \ar @{-} "B1" },
{ "A1" \ar @{-} "C1" },
{ "A1" \ar @{-} "D1" },
{ "C1" \ar @{-} "B1" },
{ "D1" \ar @{-} "B1" },
{ "A" \ar @{-} "B" },
{ "A" \ar @{-} "D" },
{ "A" \ar @{-} "E" },
{ "C" \ar @{-} "B" },
{ "C" \ar @{-} "D" },
{ "C" \ar @{-} "E" },
{ "A'" \ar @{-} "B'" },
{ "A'" \ar @{-} "C'" },
{ "A'" \ar @{-} "F'" },
{ "A'" \ar @{-} "G'" },
{ "C'" \ar @{-} "B'" },
{ "D'" \ar @{-} "B'" },
{ "B'" \ar @{-} "E'" },
{ "D'" \ar @{-} "F'" },
{ "G'" \ar @{-} "F'" },
{ "C'" \ar @{-} "D'" },
{ "C'" \ar @{-} "E'" },
{ "E'" \ar @{-} "G'" },
{ "E'" \ar @{-} "F'" },
{ "D'" \ar @{-} "G'" },
(12,-12)*{\mbox{{\rm
Figure }}2},
(55,17)*{w_0},
(50,-9)*{w_0'},
(60,-16)*{\mbox{{\rm
Figure }}3},
(105,17)*{u_0},
(110,-1)*{\mbox{{\rm
Figure }}4}
\end{xy}
\]
%%%%%%%%%%%%%%%%%%%%%%%%%%%%%%%%%%%%%%%%%%%%%%%%%%%%%%%%%%%%%%%%%%%%%%%%%%%%%%%%
%%%%%%%%%%%%%%%%%%%%%%%%%%%%%%%%%%%%%%%%%%%%%
%%%%%%%%%%%%%%%%%%%%%%%%%%%%%%%%%%%%%%%%%%%%%%%%%%%%%%%%%%%%%%%%%%%%%%%%%%%%%
\begin{rema}
Let $\Gamma,\Gamma'$ be two distance regular graphs. If the two hypergroups $H(\Gamma),H(\Gamma')$ are isomorphic, that is, there is a bijective $*$-homomorphism $\Phi:\mathbb{C}H(\Gamma)\to\mathbb{C}H(\Gamma')$, then each constants $Q(\Gamma)_{i,j}^k$ associated with $\Gamma$ in Definition \ref{d.r.g}, coincide with $Q(\Gamma')_{i,j}^k$. Indeed, the hypergroup structures are represented as $p_{i,j}^k=\frac{Q(\Gamma)_{j,k}^i}{Q(\Gamma)_{j,j}^0}$ and $Q(\Gamma)_{j,0}^j=1$.
\end{rema}

\section{Distances distribution obtained from random walks on Cayley graphs}\label{Time evolution of distances obtained from random walks}
We consider a random walk on a Cayley graph. For a convenience in the later sections, we shall give precise definitions a probability measure and random variables describing time evolutions of distances between the base point and vertices which a random walker passes trough, as follows:
%%%%%%%%%%%%%%%%%%%%%%%%%%%%%%%%%%%%%%%%%%%%%%%%%%%%%%%%%%%%%%%%%%%%%%%%%%%%%%%
\begin{defi}\label{defprobcay}
Let $G$ be a Cayley graph with the base point $v_0 = e$ and $\{\alpha_i\}_{i\in I(G,v_0)}$ a sequence of non-negative numbers with $\sum_{i\in
I(G,v_0)}\alpha_i|S_i(v_0)|=1$. We define a probability measure $\mathbb{P}_0$ on $G$ by $\mathbb{P}_0(\{v\})=\alpha_{|v|}$ for each $v\in
G$ and denote by $\mathbb{P}$ the probability measure on $\Omega=G^\mathbb{N}$ via Kolmogorov's extension theorem. For each $m\in\mathbb{N}$, we define a $G$-valued random variable $X_m$ on $\Omega$, which describes a distance of the $n$-th jump, by $X_m((\omega_n)_{n=1}^\infty)=\omega_m$ for $(\omega_n)_{n=1}^\infty\in\Omega$.

We call $\{\alpha_i\}_{i\in
I(G,v_0)}$ a distribution of $G$. Also, if $G$ is finite and $\alpha_i=\alpha_j$ for all $i,j\in
I(G,v_0)$, we say that $G$ has the uniform distribution.

For each $n\in\mathbb{N}$, we also define $\mathbb{N}_0$-valued random variables $Z_n$ on $\Omega$, which describes a distance between the unit element and a random vertex at the time $n$, by $Z_n=|X_1X_2\cdots
X_n|.$ Since we assume that a random walker leaves from the unit element, suppose $Z_0=0$.
\end{defi}
For $n\in\mathbb{N}$, we have $\mathbb{P}(X_n=v)=\alpha_{|v|}$ and $\mathbb{P}(|X_n|=i)=\alpha_i|S_i(v_0)|$ for any $v\in
G$ and $i\in
I(G)$.

We shall discuss the Markov property of the process $\{Z_n\}_{n=0}^\infty$ defined in Definition \ref{defprobcay} and its stationary distribution. We refer the reader to \cite{brem99} for the general theory of Markov chains.

Let $G$ be a not necessarily finite Cayley graph with a distribution $\{\alpha_n\}_{n=0}^\infty$ satisfying $\alpha_n>0$ for all $n\in
I(G)$. Suppose $\{X_n\}_{n=1}^\infty$ and $\{Z_n\}_{n=0}^\infty$ are the processes defined in Definition \ref{defprobcay}. 
By the condition $(S3)$, we can calculate as 
\begin{align*}
&\mathbb{P}(Z_0=i_0,\ Z_1=i_1,\ \ldots,\ Z_k=i_k)\\
&=\sum_{w_1\in
S_{i_1}(v_0)}\sum_{w_2\in
S_{i_2}(w_1^{-1})}\sum_{w_3\in
S_{i_3}(w_2^{-1}w_1^{-1})}\cdots\sum_{w_k\in
S_{i_k}(w_{k-1}^{-1}\cdots
w_1^{-1})}\alpha_{|w_1|}\cdots\alpha_{|w_k|}
\end{align*}
for each $i_1,\ldots,i_k$, where suppose $i_0=0$. 
Hence, in general, the conditional probability
\[
\mathbb{P}\left(Z_{n+1}=i_{n+1} ~\bigg{|}~
Z_0=i_0,\ Z_1=i_1,\ \ldots,\ Z_{n-1}=i_{n-1},\ Z_n=i_n\right)
\]
depends on $i_1,\ldots,i_{n-1}\in
I(G)$, that is, the process $\{Z_n\}_{n=0}^\infty$ is not always a Markov chain. However, if $G$ is finite and has the uniform distribution, that is, $\alpha_n=\frac{1}{|G|}$ for all $i \in I(G)$,
then we can show that $\{Z_n\}_{n=0}^\infty$ is a Markov chain and, moreover,
has the independently identically distribution as follows: it holds that
\begin{equation}\label{probuniform}
\mathbb{P}(Z_0=i_0,\ Z_1=i_1,\ \ldots,\ Z_k=i_k)=\frac{1}{|G|^k}\prod_{l=1}^k|S_{i_l}|
\end{equation}
by the condition $(S1)$. Note that $\mathbb{P}(Z_0=i_0,\ldots,Z_n=i_n)\neq0$ because $S_{l}(v_0)\neq\varnothing$ for all $l\in
I(G)$. 
We have $\mathbb{P}\left(Z_{n+1}=i_{n+1} ~\bigg{|}~
Z_0=i_0,\ldots,Z_n=i_n\right)
=\frac{|S_{i_{n+1}}|}{|G|}$ which is independent on the choice of $i_1,\cdots,i_{n-1},i_n$ and coincide with $\mathbb{P}(Z_n = i_{n+1})$.

In such a situation, we denote by $P=(p_{ij})_{i,j\in
I(G)}$ the transition probability matrix associated with the ergodic Markov chain $\{Z_n\}_{n=0}^\infty$. Then $p_{ij}=\frac{|S_j|}{|G|}$ depends only $j$, and hence we have
\[
P=\frac{1}{|G|}\begin{pmatrix}1&|S_1|&|S_2|&\cdots&|S_{M(G)}|
\\1&|S_1|&|S_2|&\cdots&|S_{M(G)}|\\
\vdots&\vdots&\vdots&\cdots&\vdots\\
1&|S_1|&|S_2|&\cdots&|S_{M(G)}|\end{pmatrix},
\]
which is idempotent, with a stationary distribution
\begin{equation}\label{s.d.}
\pi_G=\frac{1}{|G|}(1,|S_1|,|S_2|,\ldots,|S_{M(G)}|).
\end{equation}

\section{Hypergroup products}\label{Hypergroup products}
In this section, we will clarify a relation between time evolutions of distances between a base point $v_0$ on a graph $\Gamma$ and random vertices, and products on the pre-hypergroup $H(\Gamma)=\{x_i\}_{i\in
I(\Gamma)}$ derived from $\Gamma$. In other words, our goal is to give a probability theoretic interpretation to $m$-th products on $H(\Gamma)$. Of course, by the construction, a product formed as $x_i\circ
x_j$ has the probability theoretic meaning. However, the meaning of a product formed as $(( \cdots ((x_{i_1} \circ x_{i_2}) \circ x_{i_3}) \circ \cdots ) \circ x_{i_{m-1}})\circ  x_{i_m}$ has not been clarified for $m>2$.

First, the familiy $\{p_{i,j}^k\}$, 
given by the hypergroup product (\ref{pijk}) with respect to a Cayley graph $G$, 
can be represented by conditional probabilities as follows:
%%%%%%%%%%%%%%%%%%%%%%%%%%%%%%%%%%%%%%%%%%%%%%%%%%%%%%%%%%%%%%%%%%%%%%%%%%%%%
\begin{prop}\label{relationCayley}
Let $G$ be a Cayley graph. For every $i,j,k\in
I(G)$ with $\alpha_i\neq0$ and $\alpha_j\neq0$, we have $p_{i,j}^k=\mathbb{P}\left(Z_2=k ~\bigg{|}~
|X_1|=i,\ |X_2|=j\right).$
\end{prop}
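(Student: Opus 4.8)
The plan is to compute the conditional probability on the right-hand side directly from the definitions of the random variables $X_1, X_2$ and $Z_2$ in Definition \ref{defprobcay}, and to match the resulting expression with the formula (\ref{pijk}) defining $p_{i,j}^k$. By definition, $Z_2 = |X_1 X_2|$, so the event $\{Z_2 = k,\ |X_1| = i,\ |X_2| = j\}$ is the set of pairs $(v, w) \in G \times G$ with $|v| = i$, $|w| = j$, and $|vw| = k$, that is, $v \in S_i(v_0)$, $w \in S_j(v_0)$, and $vw \in S_k(v_0)$. Since $X_1$ and $X_2$ are coordinate projections and $\mathbb{P}$ is the product measure, these two variables are independent with $\mathbb{P}(X_1 = v) = \alpha_{|v|}$ and $\mathbb{P}(X_2 = w) = \alpha_{|w|}$, so the probability of this event factors as a double sum of $\alpha_i \alpha_j$ over the admissible pairs, and the conditioning event $\{|X_1| = i,\ |X_2| = j\}$ has probability $\alpha_i |S_i(v_0)| \cdot \alpha_j |S_j(v_0)|$.

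The key step is to rewrite the condition $vw \in S_k(v_0)$ using the Cayley-graph symmetry $(S3)$ from Lemma \ref{cayleylemm}. First I would fix $v \in S_i(v_0)$ and count the $w \in S_j(v_0)$ with $vw \in S_k(v_0)$. Writing $w = v^{-1}u$ with $u = vw$, the condition $u \in S_k(v_0)$ together with $(S3)$ (applied to translate by $v^{-1}$, giving $w = v^{-1}u \in S_j(v_0) \iff u \in S_j(v)$) shows that the number of valid $w$ equals $|S_j(v) \cap S_k(v_0)|$. Thus the numerator sum over admissible pairs becomes $\alpha_i \alpha_j \sum_{v \in S_i(v_0)} |S_j(v) \cap S_k(v_0)|$.

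Dividing the numerator by the denominator $\alpha_i |S_i(v_0)| \cdot \alpha_j |S_j(v_0)|$, the factors $\alpha_i \alpha_j$ cancel (this is where the hypotheses $\alpha_i \neq 0$ and $\alpha_j \neq 0$ are needed, to ensure the conditioning event has positive probability), and I would be left with
\[
\frac{1}{|S_i(v_0)|\,|S_j(v_0)|} \sum_{v \in S_i(v_0)} |S_j(v) \cap S_k(v_0)|.
\]
To finish, I would invoke the condition $(S1)$, which holds for every Cayley graph by Lemma \ref{cayleylemm}: it gives $|S_j(v)| = |S_j(v_0)|$ for every $v$, so $|S_j(v_0)|$ in the denominator can be moved inside the sum as $|S_j(v)|$, producing exactly the defining expression (\ref{pijk}) for $p_{i,j}^k$.

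The main obstacle, though it is more a bookkeeping care than a genuine difficulty, is the correct application of $(S3)$ to identify the counting set $\{w \in S_j(v_0) : vw \in S_k(v_0)\}$ with $S_j(v) \cap S_k(v_0)$ via the translation bijection $F_v$; one must verify that the substitution $u = vw$ sends this set bijectively onto $S_j(v) \cap S_k(v_0)$ rather than merely matching cardinalities, and keep straight which translations land in $S_j(v)$ versus $S_j(v_0)$. The independence of $X_1$ and $X_2$ and the factorization of the product measure are immediate from the Kolmogorov construction, so no subtlety arises there.
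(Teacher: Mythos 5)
Your proposal is correct and follows essentially the same route as the paper's own proof: compute the joint probability of the event $\{|X_1X_2|=k,\ |X_1|=i,\ |X_2|=j\}$ as a sum of $\alpha_i\alpha_j$ over admissible pairs, use the translation bijection from $(S3)$ to identify $\{w\in S_j(v_0)\mid vw\in S_k(v_0)\}$ with $S_j(v)\cap S_k(v_0)$, divide by $\mathbb{P}(|X_1|=i,\ |X_2|=j)=\alpha_i\alpha_j|S_i(v_0)||S_j(v_0)|$, and invoke $(S1)$ to replace $|S_j(v_0)|$ by $|S_j(v)|$ inside the sum. No gaps; the bookkeeping point you flag about the bijection is exactly the step the paper also singles out.
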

\begin{proof}
Recall that $G$ satisfies the conditions $(S1)$ and $(S3)$ by Lemma \ref{cayleylemm}. The condition $(S3)$ implies that there is a bijection between $\{w\in S_j(v_0)\mid |vw|=k\}$ and $S_j(v)\cap S_k(v_0)$ for all $v\in S_i(v_0)$. 
Thus, we have
\begin{align*}
&\mathbb{P}(|X_1X_2|=k,\ |X_1|=i,\ |X_2|=j)\\
&=\sum\left\{\mathbb{P}(X_1=v,\ X_2=w) \mid (v,w)\in S_i(v_0)\times S_j(v_0),\ vw\in S_k(v_0)\right\}\\
&=\sum\left\{\alpha_i\alpha_j \mid(v,w) \in S_i(v_0)\times S_j(v_0),\ vw\in S_k(v_0)\right\}\\
&=\alpha_i\alpha_j\sum_{v\in S_i(v_0)}\sum\left\{1\mid
w\in S_j(v_0),\ vw\in S_k(v_0)\right\}\\
&=\alpha_i\alpha_j\sum_{v\in S_i(v_0)}\sum\left\{1\mid w\in S_j(v)\cap S_k(v_0)\right\}
=\alpha_i\alpha_j\sum_{v\in S_i(v_0)}| S_j(v)\cap S_k(v_0)|.
\end{align*}
Since $\mathbb{P}(|X_1|=i,\ |X_2|=j)=\alpha_i\alpha_j| S_i(v_0)|  | S_j(v_0)|$,  
the condition $(S1)$ implies that
\begin{align*}
&\mathbb{P}\left(|X_1X_2|=k ~\bigg{|}~
|X_1|=i,\ |X_2|=j\right)=\frac{1}{| S_i(v_0)|}\sum_{v\in S_i(v_0)}\frac{| S_j(v)\cap S_k(v_0)|}{| S_j(v_0)|}\\
&=\frac{1}{| S_i(v_0)|}\sum_{v\in S_i(v_0)}\frac{| S_j(v)\cap S_k(v_0)|}{| S_j(v)|}=p_{i,j}^k.
\end{align*}
\end{proof}
%%%%%%%%%%%%%%%%%%%%%%%%%%%%%%%%%%%%%%%%%%%%%%%%%%%%%%%%%%%%%%%%%%%%%%%%%%%%%%%%
Note that the distribution $\{\alpha_n\}$ does not appear in a conditional probability as the form in the previous theorm.
Now, we shall define three type objects derived from $m$ jumps from a base point as follows:
%%%%%%%%%%%%%%%%%%%%%%%%%%%%%%%%%%%%%%%%%%%%%%%%%%%%%%%%%%%%%%%%%%%%%%%%%%%%%%%%%%%%%%%%%%%%%%%%%%%%%%%%%%%%%
\begin{defi}\label{probs}
Let $(\Gamma, v_0)$ be a pointed graph with a vertex set $V$ and $G$ a {\rm(}pointed{\rm)} Cayley graph. 
\begin{itemize}
\item[(1)] For $i_1, i_2, \ldots , i_m \in I(\Gamma)$, 
we define
\begin{align}
& PL(i_1,i_2,\ldots,i_m) = (( \cdots ((x_{i_1} \circ x_{i_2}) \circ x_{i_3}) \circ \cdots ) \circ x_{i_{m-1}})\circ  x_{i_m}, \label{eqn:ALS}\\
& J( i_1, i_2, \ldots , i_m ) 
= \sum_{v_1 \in S_{i_1}(v_0)}\sum_{v_2 \in S_{i_2}(v_1)} \cdots \sum_{v_m \in S_{i_m}(v_{m -1})}
\frac{1}{\prod_{j =1}^{m}| S_{i_j} ( v_{j -1} ) |} x_{| v_m |}.\label{eqn:EL}
\end{align}
\item[(2)] Let $\mathbb{P}$ be the probability measure given in {\rm
Definition \ref{defprobcay}} with respect to a sequence $\{\alpha_i\}_{i\in
I(G,v_0)}$ of positive numbers. For $i_1, i_2, \ldots , i_m \in I(G)$ and $k \in I (G)$, 
we define
\begin{equation}\label{eqn:prob}
p_{i_1, i_2, \ldots , i_m}^k
=\mathbb{P}\left( Z_m = k ~\bigg{|}~ | X_1 | = i_1 , | X_2 | = i_2 , \ldots, | X_m | = i_m\right).
\end{equation}
\end{itemize}
\end{defi}
%%%%%%%%%%%%%%%%%%%%%%%%%%%%%%%%%%%%%%%%%%%%%%%%%%%%%%%%%%%%%%%%%%%%%%%%%%%%%%%%%%%%%%%%%%%%%%%%%%%%%%%%%%%%%
\begin{rema}\label{probrema}
\begin{enumerate}[{\rm(1)}]
\item
The right hand side of $\eqref{eqn:ALS}$ means the $m-1$ times products from the left step by step. 
\item
The coefficient $\tilde{p}_{i_1,\ldots,i_m}^k$ of $x_k$ in $J(i_1,\ldots,i_m)$ is the probability that a random walker reaches a vertex whose distance from the base point $v_0$ is $k$ under $m$-steps jumps as $v_0\xrightarrow{i_1}\cdot\xrightarrow{i_2}\cdots\xrightarrow{i_m}\cdot$.
\item
By {\rm
Proposition \ref{relationCayley}}, the probability $p_{i,j}^k$ in \eqref{eqn:prob} is well-defined for each $i,j\in
I(G)$.
\end{enumerate}
\end{rema}
%%%%%%%%%%%%%%%%%%%%%%%%%%%%%%%%%%%%%%%%%%%%%%%%%%%%%%%%%%%%%%%%%%%%%%%%%%%%%%%

The definition \eqref{eqn:EL} will play a role of giving a probability theoretic interpretation to $m$-th products on $H(\Gamma)$ for a graph which is not necessary a Cayley graph. For a Cayley graph, it is shown that \eqref{eqn:prob} coincides with the coefficient of $x_k$ in \eqref{eqn:EL} as the following theorem.

\begin{theo}\label{2-3}
Let $G$ be a Cayley graph. For all $m\geq2$ and $i_1,\ldots,i_m,k\in
I(G)$, we have
\begin{align}
&\sum_{v_1 \in S_{i_1}(v_0)}\sum_{v_2 \in S_{i_2}(v_1)} \cdots \sum_{v_{m-1} \in S_{i_{m-1}}(v_{m -2})}|S_{i_m}(v_{m-1})\cap
S_k(v_0)|\label{cardset}\\
&=|\{(v_1,\ldots,v_m)\in
\prod_{k=1}^m
S_{i_k}(v_0)\mid|v_1\cdots
v_m|=k\}|\nonumber,
\end{align}
and $\tilde{p}_{i_1,\ldots,i_m}^k=p_{i_1,\ldots,i_m}^k$ holds.
\end{theo}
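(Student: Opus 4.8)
The plan is to reduce both assertions to the combinatorial identity \eqref{cardset}: I would prove this identity first by exhibiting an explicit bijection, and then deduce $\tilde{p}_{i_1,\ldots,i_m}^k=p_{i_1,\ldots,i_m}^k$ by computing each quantity separately and comparing both against \eqref{cardset}. Throughout I take $v_0=e$.

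For the identity, I would read the left-hand side as the number of \emph{paths} $(v_1,\ldots,v_m)$ with $v_j\in S_{i_j}(v_{j-1})$ for each $j$ and $|v_m|=k$; indeed, carrying out the innermost summation $\sum_{v_m\in S_{i_m}(v_{m-1})}$ restricted to $|v_m|=k$ produces exactly the factor $|S_{i_m}(v_{m-1})\cap S_k(v_0)|$, so the left side counts these constrained paths. The right-hand side counts \emph{jump tuples} $(u_1,\ldots,u_m)$ with $u_j\in S_{i_j}(v_0)$ and $|u_1\cdots u_m|=k$. I would define the map $(v_1,\ldots,v_m)\mapsto(u_1,\ldots,u_m)$ by $u_j=v_{j-1}^{-1}v_j$, with inverse $(u_1,\ldots,u_m)\mapsto(v_1,\ldots,v_m)$ given by the partial products $v_j=u_1\cdots u_j$. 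The group structure gives $v_m=u_1\cdots u_m$, so the endpoint constraint $|v_m|=k$ transports to $|u_1\cdots u_m|=k$. The crucial point is that the map preserves the distance data at \emph{every} step: applying condition $(S3)$ of Lemma \ref{cayleylemm} with the left translation by $v_{j-1}$ yields $u_j\in S_{i_j}(v_0)$ if and only if $v_{j-1}u_j=v_j\in S_{i_j}(v_{j-1})$. Hence the two index sets are in bijection, which is \eqref{cardset}.

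For the equality of the two probabilities, I would first invoke condition $(S1)$ (again Lemma \ref{cayleylemm}) to note that $|S_{i_j}(v_{j-1})|=|S_{i_j}(v_0)|=:|S_{i_j}|$ is independent of the vertex, so the denominator in \eqref{eqn:EL} is the constant $\prod_{j=1}^{m}|S_{i_j}|$. Factoring it out, the coefficient $\tilde{p}_{i_1,\ldots,i_m}^k$ of $x_k$ in $J(i_1,\ldots,i_m)$ equals $\bigl(\prod_{j=1}^{m}|S_{i_j}|\bigr)^{-1}$ times the left-hand side of \eqref{cardset}. On the probability side, the $X_j$ are independent with $\mathbb{P}(X_j=v)=\alpha_{|v|}$, so I would expand $\mathbb{P}(Z_m=k,\,|X_1|=i_1,\ldots,|X_m|=i_m)$ as $\prod_{j=1}^{m}\alpha_{i_j}$ times the cardinality appearing on the right-hand side of \eqref{cardset}, while $\mathbb{P}(|X_1|=i_1,\ldots,|X_m|=i_m)=\prod_{j=1}^{m}\alpha_{i_j}|S_{i_j}|$ (positive, since the $\alpha_{i_j}$ are positive and $S_{i_j}\neq\varnothing$). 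Dividing in \eqref{eqn:prob}, the factors $\prod_{j}\alpha_{i_j}$ cancel and $p_{i_1,\ldots,i_m}^k$ equals $\bigl(\prod_{j=1}^{m}|S_{i_j}|\bigr)^{-1}$ times the right-hand side of \eqref{cardset}. Since the two sides of \eqref{cardset} coincide, so do $\tilde{p}_{i_1,\ldots,i_m}^k$ and $p_{i_1,\ldots,i_m}^k$.

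I expect the main obstacle to be the bookkeeping in the bijection: one must verify the per-step distance constraint $v_j\in S_{i_j}(v_{j-1})$ is preserved at \emph{each} index $j$, not merely at the endpoint, which is precisely what $(S3)$ delivers, and confirm that the partial-product map is a genuine two-sided inverse of $u_j=v_{j-1}^{-1}v_j$. A secondary point worth stating cleanly is that the last summation on the left of \eqref{cardset} is to be read as the count $|S_{i_m}(v_{m-1})\cap S_k(v_0)|$, i.e.\ as the cardinality of a set intersection rather than as a further nested sum; once this interpretation and the bijection are in place, the remaining manipulations are routine rearrangements of finite sums.
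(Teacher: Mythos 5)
Your proposal is correct and follows essentially the same route as the paper: the identity \eqref{cardset} is obtained from the left-translation property $(S3)$ of Lemma \ref{cayleylemm} (the paper peels off one summation at a time and ``repeats the argument,'' whereas you package the same translations into a single bijection $u_j=v_{j-1}^{-1}v_j$ with inverse given by partial products), and the equality $\tilde{p}_{i_1,\ldots,i_m}^k=p_{i_1,\ldots,i_m}^k$ then follows exactly as in the paper by using $(S1)$ to make the denominators constant and the independence of the $X_j$ to cancel the $\alpha_{i_j}$'s. No gaps.
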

\begin{proof}
We shall prove \eqref{cardset}. As the proof of Proposition \ref{relationCayley}, there exists a bijection between $S_{i_m}(v_{m-1})\cap
S_k(v_0)$ and $\{v_m\in
S_{i_m}(v_0)\mid|v_{m-1}v_m|=k\}$, and hence we have
\begin{align*}
&\sum_{v_1 \in S_{i_1}(v_0)} \cdots \sum_{v_{m-2} \in S_{i_{m-2}}(v_{m -3})}\sum_{v_{m-1} \in S_{i_{m-1}}(v_{m -2})}|S_{i_m}(v_{m-1})\cap
S_k(v_0)|\\
&=\sum_{v_1 \in S_{i_1}(v_0)} \cdots \sum_{v_{m-2} \in S_{i_{m-2}}(v_{m -3})}\sum_{v_{m-1} \in S_{i_{m-1}}(v_{m -2})}|\{v_m\in
S_{i_m}(v_0)\mid|v_{m-1}v_m|=k\}|\\
&=\sum_{v_1 \in S_{i_1}(v_0)} \cdots \sum_{v_{m-2} \in S_{i_{m-2}}(v_{m -3})}|\{(v_{m-1},v_m)\in
S_{i_{m-1}}(v_{m-2})\times
S_{i_m}(v_0)\mid|v_{m-1}v_m|=k\}|\\
&=\sum_{v_1 \in S_{i_1}(v_0)} \cdots \sum_{v_{m-2} \in S_{i_{m-2}}(v_{m -3})}|\{(v_{m-1},v_m)\in
S_{i_{m-1}}(v_0)\times
S_{i_m}(v_0)\mid|v_{m-2}v_{m-1}v_m|=k\}|
\end{align*}
Repeating this argument, it equals to $|\{(v_1,\ldots,v_m)\in\prod_{k=1}^m
S_{i_k}(v_0)\mid|v_1\cdots
v_m|=k\}|.$

By \eqref{cardset} and $(S1)$, we have
\begin{align*}
& \tilde{p}_{i_1,\ldots,i_m}^k=\frac{|\{(v_1,\ldots,v_m)\in
\prod_{k=1}^m
S_{i_k}(v_0)\mid|v_1\cdots
v_m|=k\}|}{\prod_{j=1}^m|S_{i_j}(v_{0})|}
=p_{i_1,\ldots,i_m}^k.
\end{align*}
This completes the proof.
\end{proof}

Next, we shall consider the definitions \eqref{eqn:ALS} and \eqref{eqn:EL}. 
In the following theorem, it will be shown that $PL(i_1,\ldots,i_m)
= J(i_1,\ldots,i_m)$ for a pointed graph $(\Gamma,v_0)$ with the conditions (S1) and (S2). 
However, the case for two jumps $(m=2)$ can be shown without the both of the conditions (S1) and (S2) as follows: 
for a pointed graph $(\Gamma,v_0)$, and $i,j\in I(\Gamma,v_0)$, 
by the definitions, we have
\begin{align}
& J(
i,j)=\sum_{k\in
I(\Gamma)}\sum_{v_1\in
S_i(v_0)}\sum_{v_2\in
S_j(v_1)\cap
S_k(v_0)}\frac{1}{|S_i(v_0)||S_j(v_1)|}x_k \label{i,j}\\
&=\sum_{k\in
I(\Gamma)}\frac{1}{|S_i(v_0)|}\sum_{v_1\in
S_i(v_0)}\frac{|S_j(v_1)\cap
S_k(v_0)|}{|S_j(v_1)|}x_k=\sum_{k\in
I(\Gamma)}p_{i,j}^kx_k= PL(i,j).\nonumber
\end{align} 

%%%%%%%%%%%%%%%%%%%%%%%%%%%%%%%%%%%%%%%%%%%%%%%%%%%%%%%%%%%%%%%%%%%%%%%%%%%%%%

\begin{theo}\label{1-2}
Suppose that $(\Gamma, v_0)$ is a pointed graph equipped with the conditions $(S1)$ and $(S2)$. 
Then we have $PL(i_1,i_2,\ldots,i_m) =  J( i_1, i_2, \ldots , i_m )$ for $i_1, i_2, \ldots, i_m \in I(\Gamma)$.
\end{theo}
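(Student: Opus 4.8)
The plan is to argue by induction on $m$. The base case $m=2$ is exactly the computation \eqref{i,j}, which already establishes $PL(i,j) = J(i,j)$ without using $(S1)$ or $(S2)$. For the inductive step I would assume $PL(i_1,\ldots,i_{m-1}) = J(i_1,\ldots,i_{m-1})$ and, using the defining recursion $PL(i_1,\ldots,i_m) = PL(i_1,\ldots,i_{m-1})\circ x_{i_m}$ read off from \eqref{eqn:ALS}, reduce the goal to the identity $J(i_1,\ldots,i_m) = J(i_1,\ldots,i_{m-1})\circ x_{i_m}$.

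The heart of the matter is the following local identity, which I would prove first: for every vertex $w$,
\[
\frac{1}{|S_{i_m}(w)|}\sum_{u\in S_{i_m}(w)}x_{|u|} = x_{|w|}\circ x_{i_m}.
\]
To see this, I group the vertices $u\in S_{i_m}(w)$ by their distance $l=|u|$ from $v_0$, turning the left-hand side into $\sum_{l}\frac{|S_{i_m}(w)\cap S_l(v_0)|}{|S_{i_m}(w)|}x_l$. Expanding the right-hand side by \eqref{pijk} gives $\sum_l p_{|w|,i_m}^l x_l$ with $p_{|w|,i_m}^l=\frac{1}{|S_{|w|}(v_0)|}\sum_{v\in S_{|w|}(v_0)}\frac{|S_{i_m}(v)\cap S_l(v_0)|}{|S_{i_m}(v)|}$. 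Condition $(S1)$ replaces each $|S_{i_m}(v)|$ by the constant $|S_{i_m}(v_0)|$, and condition $(S2)$ makes $|S_{i_m}(v)\cap S_l(v_0)|$ constant as $v$ ranges over $S_{|w|}(v_0)$, equal to its value at $v=w$; the average over the sphere then collapses and the two coefficients of $x_l$ coincide.

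With this identity in hand, I peel off the innermost summation in $J(i_1,\ldots,i_m)$: writing the denominator as $\prod_{j=1}^{m-1}|S_{i_j}(v_{j-1})|\cdot|S_{i_m}(v_{m-1})|$ and performing the $v_m$-sum first, the local identity (applied with $w=v_{m-1}$) replaces the inner block by $x_{|v_{m-1}|}\circ x_{i_m}$. Since $\circ$ is bilinear, I can pull $\circ\,x_{i_m}$ out through the remaining finite sums over $v_1,\ldots,v_{m-1}$, which leaves precisely $J(i_1,\ldots,i_{m-1})\circ x_{i_m}$; the induction hypothesis then yields $PL(i_1,\ldots,i_{m-1})\circ x_{i_m}=PL(i_1,\ldots,i_m)$.

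The step I expect to be the main obstacle is the local identity. Its subtlety is that the coefficient $|S_{i_m}(w)\cap S_l(v_0)|/|S_{i_m}(w)|$ on the left is tied to the single vertex $w$, whereas the hypergroup coefficient $p_{|w|,i_m}^l$ is an average over the entire sphere $S_{|w|}(v_0)$; only $(S1)$ together with $(S2)$ forces these to agree, and this is exactly where both symmetry hypotheses are indispensable.
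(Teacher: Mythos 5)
Your proposal is correct and follows essentially the same route as the paper's proof: induction on $m$, reduction of the inductive step to the identity $J(i_1,\ldots,i_{m-1})\circ x_{i_m}=J(i_1,\ldots,i_m)$, and the use of $(S1)$ and $(S2)$ to collapse the sphere average defining $p_{|w|,i_m}^l$ to its value at the single vertex $w$. Your ``local identity'' $\frac{1}{|S_{i_m}(w)|}\sum_{u\in S_{i_m}(w)}x_{|u|}=x_{|w|}\circ x_{i_m}$ is exactly the step the paper carries out inline in its long chain of equalities, just isolated as a separate lemma.
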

\begin{proof}
We shall show the theorem by induction. Assume that $PL(i_1,i_2,\ldots,i_{m-1}) =  J( i_1, i_2, \ldots , i_{m-1} ).$ As the previous argument $ J(
i,j)= PL(i,j)$ in (\ref{i,j}), we have
\begin{align*}
& PL(i_1,\ldots,i_m)=(x_{i_1}\circ\cdots\circ
x_{i_{m-1}})\circ
x_{i_m}= J(
i_1,\ldots,i_{m-1})\circ
x_{i_m}\\
&=\left(\sum_{v_1 \in S_{i_1}(v_0)} \cdots \sum_{v_{m-1} \in S_{i_{m-1}}(v_{m -2})}
\frac{1}{\prod_{j =1}^{m-1}| S_{i_j} ( v_{j -1} ) |} x_{| v_{m-1} |}\right)\circ
x_{i_m}\\
&=\sum_{v_1 \in S_{i_1}(v_0)} \cdots \sum_{v_{m-2} \in S_{i_{m-2}}(v_{m-3})}\sum_{k\in
I(\Gamma)}\sum_{v_{m-1} \in S_{i_{m-1}}(v_{m -2})\cap
S_k(v_0)}
\frac{1}{\prod_{j =1}^{m-1}| S_{i_j} ( v_{j -1} ) |} x_{k}\circ
x_{i_m}\\
&=\sum_{v_1 \in S_{i_1}(v_0)} \cdots \sum_{v_{m-2} \in S_{i_{m-2}}(v_{m-3})}\sum_{k\in
I(\Gamma)}\frac{|S_{i_{m-1}}(v_{m -2})\cap
S_k(v_0)|}{\prod_{j =1}^{m-1}| S_{i_j} ( v_{j -1} ) |} x_{k}\circ
x_{i_m}.
\end{align*}
Now, by $(S1)$ and $(S2)$, we have
\begin{align*}
&\sum_{k\in
I(\Gamma)}\frac{|S_{i_{m-1}}(v_{m -2})\cap
S_k(v_0)|}{\prod_{j =1}^{m-1}| S_{i_j} ( v_{j -1} ) |} x_{k}\circ
x_{i_m}\\
&=\sum_{k\in
I(\Gamma)}\frac{|S_{i_{m-1}}(v_{m -2})\cap
S_k(v_0)|}{\prod_{j =1}^{m-1}| S_{i_j} ( v_{j -1} ) |} \sum_{l\in
I(\Gamma)}\frac{1}{|S_k(v_0)|}\sum_{v\in
S_k(v_0)}\frac{|S_{i_m}(v)\cap
S_l(v_0)|}{|S_{i_m}(v)|}x_l\\
&=\sum_{l\in
I(\Gamma)}\sum_{k\in
I(\Gamma)}\frac{|S_{i_{m-1}}(v_{m -2})\cap
S_k(v_0)|}{\prod_{j =1}^{m}| S_{i_j} ( v_{j -1} ) |}\frac{1}{|S_k(v_0)|}\sum_{v\in
S_k(v_0)}|S_{i_m}(v)\cap
S_l(v_0)|x_l\\
&=\sum_{l\in
I(\Gamma)}\sum_{k\in
I(\Gamma)}\sum_{v_{m-1}\in
S_{i_{m-1}}(v_{m-2})\cap
S_k(v_0)}\sum_{v\in
S_k(v_0)}\frac{1}{|S_k(v_0)|}\frac{|S_{i_m}(v)\cap
S_l(v_0)|}{\prod_{j =1}^{m}| S_{i_j} ( v_{j -1} ) |}x_l\\
&=\sum_{l\in
I(\Gamma)}\sum_{k\in
I(\Gamma)}\sum_{v_{m-1}\in
S_{i_{m-1}}(v_{m-2})\cap
S_k(v_0)}\sum_{v\in
S_k(v_0)}\frac{1}{|S_k(v_0)|}\frac{|S_{i_m}(v_{m-1})\cap
S_l(v_0)|}{\prod_{j =1}^{m}| S_{i_j} ( v_{j -1} ) |}x_l\\
&=\sum_{l\in
I(\Gamma)}\sum_{k\in
I(\Gamma)}\sum_{v_{m-1}\in
S_{i_{m-1}}(v_{m-2})\cap
S_k(v_0)}\frac{|S_{i_m}(v_{m-1})\cap
S_l(v_0)|}{\prod_{j =1}^{m}| S_{i_j} ( v_{j -1} ) |}x_l\\
&=\sum_{l\in
I(\Gamma)}\sum_{v_{m-1}\in
S_{i_{m-1}}(v_{m-2})}\frac{|S_{i_m}(v_{m-1})\cap
S_l(v_0)|}{\prod_{j =1}^{m}| S_{i_j} ( v_{j -1} ) |}x_l\\
&=\sum_{l\in
I(\Gamma)}\sum_{v_{m-1}\in
S_{i_{m-1}}(v_{m-2})}\sum_{v_m\in
S_{i_m}(v_{m-1})\cap
S_l(v_0)}\frac{1}{\prod_{j =1}^{m}| S_{i_j} ( v_{j -1} ) |}x_l\\
&=\sum_{v_{m-1}\in
S_{i_{m-1}}(v_{m-2})}\sum_{l\in
I(\Gamma)}\sum_{v_m\in
S_{i_m}(v_{m-1})\cap
S_l(v_0)}\frac{1}{\prod_{j =1}^{m}| S_{i_j} ( v_{j -1} ) |}x_{|v_m|}\\
&=\sum_{v_{m-1}\in
S_{i_{m-1}}(v_{m-2})}\sum_{v_m\in
S_{i_m}(v_{m-1})}\frac{1}{\prod_{j =1}^{m}| S_{i_j} ( v_{j -1} ) |}x_{|v_m|},\\
\end{align*}
and hence 
\begin{align*}
&\sum_{v_1 \in S_{i_1}(v_0)}\sum_{v_2 \in S_{i_2}(v_1)} \cdots \sum_{v_{m-2} \in S_{i_{m-2}}(v_{m-3})}\sum_{k\in
I(\Gamma)}\frac{|S_{i_{m-1}}(v_{m -2})\cap
S_k(v_0)|}{\prod_{j =1}^{m-1}| S_{i_j} ( v_{j -1} ) |} x_{k}\circ
x_{i_m}\\
&=\sum_{v_1 \in S_{i_1}(v_0)}\sum_{v_2 \in S_{i_2}(v_1)} \cdots \sum_{v_{m-2} \in S_{i_{m-2}}(v_{m-3})}\sum_{v_{m-1}\in
S_{i_{m-1}}(v_{m-2})}\sum_{v_m\in
S_{i_m}(v_{m-1})}\frac{1}{\prod_{j =1}^{m}| S_{i_j} ( v_{j -1} ) |}x_{|v_m|}.
\end{align*}
This completes the proof.
\end{proof}
%%%%%%%%%%%%%%%%%%%%%%%%%%%%%%%%%%%%%%%%%%%%%%
%%%%%%%%%%%%%%%%%%%%%%%%%%%%%%%%%%%%%%%%%%%%%%%%%%%%%%%%%%%%%%%%%%%%%%%%%%%%%
\begin{exam}
All distance regular graphs satisfy the assumption {\rm(}the conditions $(S1)$ and $(S2)${\rm)} in the previous theorem. We already know that the pointed graph $(\Gamma,w_0)$ in {\rm
Example \ref{someg}} is not distance regular and satisfies the conditions $(S1)$ and $(S2)$. We present other examples of such pointed graphs drown as {\rm
Figure $5$} and {\rm
Figure $6$}. 
They all produce hypergroups.
\[
\begin{xy}
(0,0)*{\circ}="A",
(-9,-5)*{\circ}="B",
(9,-5)*{\circ}="C",
(-13,-12)*{\circ}="D",
(13,-12)*{\circ}="E",
(-12,-19)*{\circ}="F",
(12,-19)*{\circ}="G",
(-7,-25)*{\circ}="H",
(7,-25)*{\circ}="I",
(40,0)*{\circ}="K",
(32,-15)*{\circ}="L",
(48,-15)*{\circ}="M",
(29,-11)*{\circ}="N",
(51,-11)*{\circ}="O",
(40,-8)*{\circ}="P",
(27,-25)*{\circ}="Q",
(53,-25)*{\circ}="R",
{ "A" \ar @{-} "B" },
{ "B" \ar @{-} "D" },
{ "A" \ar @{-} "C" },
{ "C" \ar @{-} "E" },
{ "G" \ar @{-} "I" },
{ "D" \ar @{-} "F" },
{ "G" \ar @{-} "E" },
{ "H" \ar @{-} "F" },
{ "H" \ar @{-} "I" },
{ "A" \ar @{-} "F" },
{ "A" \ar @{-} "G" },
{ "F" \ar @{-} "I" },
{ "G" \ar @{-} "H" },
{ "B" \ar @{-} "E" },
{ "B" \ar @{-} "I" },
{ "C" \ar @{-} "D" },
{ "C" \ar @{-} "H" },
{ "E" \ar @{-} "D" },
{ "K" \ar @{-} "L" },
{ "K" \ar @{-} "M" },
{ "M" \ar @{-} "L" },
{ "L" \ar @{-} "N" },
{ "M" \ar @{-} "O" },
{ "K" \ar @{-} "P" },
{ "O" \ar @{-} "P" },
{ "P" \ar @{-} "N" },
{ "K" \ar @{-} "O" },
{ "K" \ar @{-} "N" },
{ "Q" \ar @{-} "L" },
{ "Q" \ar @{-} "M" },
{ "Q" \ar @{-} "N" },
{ "Q" \ar @{-} "O" },
{ "Q" \ar @{-} "P" },
{ "R" \ar @{-} "L" },
{ "R" \ar @{-} "M" },
{ "R" \ar @{-} "N" },
{ "R" \ar @{-} "O" },
{ "R" \ar @{-} "P" },
(0,4)*{v_0'},
(40,4)*{v_0''},
(0,-30)*{\mbox{{\rm
Figure }}5},
(40,-30)*{\mbox{{\rm
Figure }}6}
\end{xy}
\]
On the other hand, 
the authors could not find any example of pointed graphs not producing hypergroups and satisfying the conditions $(S1)$ and $(S2)$.
We propose the following conjecture:
\begin{conj}
The conditions $(S1)$ and $(S2)$ imply producing hypergroups.
\end{conj}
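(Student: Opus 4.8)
The plan is to argue by induction on $m$. The base case $m = 2$ is precisely the identity $J(i,j) = PL(i,j)$ already verified in \eqref{i,j}, and that computation notably requires neither $(S1)$ nor $(S2)$. So I would assume $PL(i_1,\ldots,i_{m-1}) = J(i_1,\ldots,i_{m-1})$ and aim to deduce the statement for $m$.

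For the inductive step, the first move is to use the very definition of $PL$ as a left-nested product to write
\[
PL(i_1,\ldots,i_m) = \bigl(PL(i_1,\ldots,i_{m-1})\bigr)\circ x_{i_m} = J(i_1,\ldots,i_{m-1})\circ x_{i_m},
\]
the last equality being the induction hypothesis. I would then expand $J(i_1,\ldots,i_{m-1})$ according to its definition \eqref{eqn:EL}, and regroup the basis elements $x_{|v_{m-1}|}$ by the value $k = |v_{m-1}|$, so that the summand carrying the factor $x_{|v_{m-1}|}$ becomes a product $x_k \circ x_{i_m}$ weighted by $|S_{i_{m-1}}(v_{m-2})\cap S_k(v_0)|\big/\prod_{j=1}^{m-1}|S_{i_j}(v_{j-1})|$. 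Each $x_k\circ x_{i_m}$ is then rewritten using the structure constants $p_{k,i_m}^l$ via the defining formula \eqref{pijk}.

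The crux of the argument, and the step where $(S1)$ and $(S2)$ are indispensable, is the replacement of the averaged structure constant by a local count at the particular vertex $v_{m-1}$. By definition $p_{k,i_m}^l$ is the average of $|S_{i_m}(v)\cap S_l(v_0)|\big/|S_{i_m}(v)|$ over all $v\in S_k(v_0)$; condition $(S2)$ guarantees that the numerator $|S_{i_m}(v)\cap S_l(v_0)|$ is constant on $S_k(v_0)$, and condition $(S1)$ that the denominator $|S_{i_m}(v)|$ equals $|S_{i_m}(v_0)|$ independently of $v$. Since $v_{m-1}$ lies in $S_{i_{m-1}}(v_{m-2})\cap S_k(v_0)\subseteq S_k(v_0)$, the average equals the value at $v_{m-1}$ itself, namely $|S_{i_m}(v_{m-1})\cap S_l(v_0)|\big/|S_{i_m}(v_{m-1})|$. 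This is exactly what is needed to convert the averaged hypergroup product back into the vertex-by-vertex count over $v_m\in S_{i_m}(v_{m-1})$ that defines $J$.

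Carrying out this substitution, the constraint $v_{m-1}\in S_{i_{m-1}}(v_{m-2})\cap S_k(v_0)$ partitions $S_{i_{m-1}}(v_{m-2})$ by distance $k=|v_{m-1}|$, so the double sum over $k$ and over such $v_{m-1}$ collapses to a plain sum over $v_{m-1}\in S_{i_{m-1}}(v_{m-2})$, while the $l$-summation repackages into a sum over $v_m \in S_{i_m}(v_{m-1})$ with the label $x_{|v_m|}$. Restoring the outer sums over $v_1,\ldots,v_{m-2}$, the whole expression is precisely $J(i_1,\ldots,i_m)$ as in \eqref{eqn:EL}, completing the induction. The main obstacle is thus entirely concentrated in the single averaging-to-local step: without $(S2)$ the jump counts genuinely vary across $S_k(v_0)$, and the hypergroup product---which only remembers the average---would no longer reproduce the vertex-level random walk statistics encoded in $J$.
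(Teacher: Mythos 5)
There is a genuine gap here: what you have written is not a proof of the Conjecture at all, but a proof of Theorem \ref{1-2} (and indeed it is essentially the same induction the paper already gives for that theorem: write $PL(i_1,\ldots,i_m)=J(i_1,\ldots,i_{m-1})\circ x_{i_m}$, expand, and use $(S1)$ and $(S2)$ to replace the average of $|S_{i_m}(v)\cap S_l(v_0)|/|S_{i_m}(v)|$ over $v\in S_k(v_0)$ by its value at the particular vertex $v_{m-1}$). The Conjecture asserts something different and stronger: that under $(S1)$ and $(S2)$ the pre-hypergroup $H(\Gamma)$ is actually a hypergroup, i.e.\ that the product $\circ$ is commutative and associative, which amounts to the identities $p_{i,j}^k=p_{j,i}^k$ and $\sum_{l}p_{h,i}^l p_{l,j}^k=\sum_{l}p_{i,j}^l p_{h,l}^k$. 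The identity $PL=J$ concerns only \emph{left-nested} products $((x_{i_1}\circ x_{i_2})\circ\cdots)\circ x_{i_m}$; it gives no information about right-nested products such as $x_h\circ(x_i\circ x_j)$, so associativity does not follow. Nor does commutativity: $J(i,j)$ and $J(j,i)$ are genuinely different expressions (one counts pairs $(v,w)$ with $|v|=i$, $d(v,w)=j$, $|w|=k$, the other pairs with $|v|=j$, $d(v,w)=i$, $|w|=k$), and $(S1)$, $(S2)$ do not obviously identify them.

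To close the gap along these lines you would need, at minimum, an additional argument showing $x_h\circ(x_i\circ x_j)=J(h,i,j)$ as well. Unwinding that product leads to averages of $|S_h(\cdot)\cap S_l(v_0)|$ over spheres centered at $v_0$ applied in the ``wrong order,'' and condition $(S2)$ --- which only controls constancy of $|S_i(\cdot)\cap S_j(v_0)|$ on the spheres $S_k(v_0)$ --- does not appear to suffice for this step. The paper itself records that it has no proof and no counterexample, which is precisely why the statement is labeled a conjecture rather than a theorem; your argument, while correct as far as it goes, re-proves the known Theorem \ref{1-2} and leaves the conjecture untouched.
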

\end{exam}
%%%%%%%%%%%%%%%%%%%%%%%%%%%%%%%%%%%%%%%%%%%%%%%%%%%%%%%%%%%%%%%%%%%%%%%%%%%%%%%
\begin{exam}
The $3$-gonal prism graph $\mathcal{P}_3$ in {\rm
Example \ref{prism}} satisfies
\[
PL(1,2,1)=\frac{6}{27}x_0+\frac{10}{27}x_1+\frac{11}{27}x_2\neq\frac{2}{9}x_0+\frac{1}{3}x_1+\frac{4}{9}x_2=J(1,2,1).
\]
The binary tree $\mathcal{B}$ with the base point $v_0$ of $\mathcal{B}$ defined in {\rm
Example \ref{tree}}, satisfies
\[
PL(1,1,2)=\frac{1}{9}x_0+\frac{4}{9}x_2+\frac{4}{9}x_4\neq\frac{1}{6}x_0+\frac{1}{6}x_2+\frac{2}{3}x_4=J(1,1,2).
\] 
\end{exam}

%%%%%%%%%%%%%%%%%%%%%%%%%%%%%%%%%%%%%%%%%%%%%%%%%%%%%%%%%%%%%%%%%%%%%%%%%%%%%%

%%%%%%%%%%%%%%%%%%%%%%%%%%%%%%%%%%%%%%%%%%%%%%%%%%%%%%%%%%%%%%%%%%%%%%%%%%%% 
\begin{coro}
If $G$ is a Cayley graph equipped with the condition {\rm
(S2)} then we have $PL(i_1,\ldots,i_m)= J(
i_1,\ldots,i_m)=\sum_{k\in
I(\Gamma)}p_{i_1,\ldots,i_m}^kx_k$ for all $k,i_1,\ldots,i_m\in
I(\Gamma)$. We have also $p_{i_{\sigma(1)},\ldots,i_{\sigma(m)}}^k=p_{i_1,\ldots,i_m}^k$ for every permutation $\sigma\in\mathfrak{S}_m$ and all $k,i_1,\ldots,i_m\in
I(G)$.
\end{coro}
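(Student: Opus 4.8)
The plan is to assemble the first chain of equalities from the two theorems just proved, and then to derive the permutation symmetry from the algebraic structure that $(S2)$ forces on $H(G)$.

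First I would establish $PL = J = \sum_k p^k x_k$. Since $G$ is a Cayley graph, Lemma \ref{cayleylemm} guarantees that $(G,e)$ satisfies $(S1)$ (and $(S3)$); together with the standing hypothesis $(S2)$, the pointed graph $(G,e)$ meets the assumptions of Theorem \ref{1-2}, so $PL(i_1,\ldots,i_m) = J(i_1,\ldots,i_m)$. By the definition of $J$ in \eqref{eqn:EL}, the coefficient of $x_k$ in $J(i_1,\ldots,i_m)$ is exactly $\tilde p_{i_1,\ldots,i_m}^k$ (see Remark \ref{probrema}(2)), and Theorem \ref{2-3} identifies $\tilde p_{i_1,\ldots,i_m}^k = p_{i_1,\ldots,i_m}^k$. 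Hence $J(i_1,\ldots,i_m) = \sum_{k\in I(G)} p_{i_1,\ldots,i_m}^k x_k$, which completes the first assertion.

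For the permutation symmetry, the key observation is that $(S2)$ unlocks the full hypergroup structure. By Lemma \ref{cayleylemm2}, a Cayley graph satisfying $(S2)$ is distance regular, so by Theorem \ref{ikka-sawadr} the pre-hypergroup $H(G)$ is in fact a hermitian discrete hypergroup. Hermitian hypergroups are commutative, and every hypergroup is associative, since $(\mathbb{C}H,\circ,*)$ is required to be a $*$-algebra. Consequently the iterated product $x_{i_1}\circ\cdots\circ x_{i_m}$ is independent of both the bracketing and the order of its factors. In particular, for every $\sigma\in\mathfrak{S}_m$ the left-bracketed products satisfy $PL(i_{\sigma(1)},\ldots,i_{\sigma(m)}) = PL(i_1,\ldots,i_m)$. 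Expanding both sides in the basis $\{x_k\}$ by means of the first part and comparing the coefficient of each $x_k$ then yields $p_{i_{\sigma(1)},\ldots,i_{\sigma(m)}}^k = p_{i_1,\ldots,i_m}^k$.

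The only delicate point is conceptual rather than computational. One must notice that the permutation invariance of these random-walk probabilities --- which is not at all apparent from the probabilistic definition \eqref{eqn:prob} on a possibly non-abelian group $G$, where the product $v_1\cdots v_m$ genuinely depends on the order of the factors --- is a direct consequence of the commutativity and associativity of $H(G)$, both of which are supplied by the hypothesis $(S2)$ through Lemma \ref{cayleylemm2}. I therefore expect that no separate combinatorial argument establishing the symmetry of the numerator of $\tilde p$ directly (which would be the naive and much harder route) is needed; the algebraic detour bypasses it entirely.
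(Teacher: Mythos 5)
Your proposal is correct and follows essentially the same route as the paper: Lemma \ref{cayleylemm2} upgrades $(S2)$ to distance regularity so that $H(G)$ is a genuine (commutative, associative) hypergroup, Theorems \ref{1-2} and \ref{2-3} give the chain $PL=J=\sum_k p^k x_k$, and the permutation invariance of $p_{i_1,\ldots,i_m}^k$ falls out of the commutativity and associativity of the product. The extra detail you supply (citing Lemma \ref{cayleylemm} for $(S1)$ and comparing coefficients in the basis $\{x_k\}$) only makes explicit what the paper's two-line proof leaves implicit.
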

\begin{proof}
By Lemma \ref{cayleylemm2}, $G$ is distance regular and produces a hypergroup $H(G)$. Thus, Theorem \ref{2-3} and Theorem \ref{1-2} imply the first assertion, and the commutativity and the associativity of $H(G)$ imply the second one.
\end{proof}
%%%%%%%%%%%%%%%%%%%%%%%%%%%%%%%%%%%%%%%%%%%%%%%%%%%%%%%%%%%%%%%%%%%%%%%%%%%%%%
\begin{exam}
For the Cayley graph ${\rm
Cay}(\mathbb{Z}^2)$, we have $PL(1,2,3)\neq
PL(2,3,1)$.
\end{exam}
%%%%%%%%%%%%%%%%%%%%%%%%%%%%%%%%%%%%%%%%%%%%%%%%%%%%%%%%%%%%%%%%%%%%%%%%%%%%%%

In the end of this section, we shall present a formula with respect to the transition probabilities giving the Markov chain discussed in Section $3$, and constants of the pre-hypergroup structure derived from $G$ as follows:
%%%%%%%%%%%%%%%%%%%%%%%%%%%%%%%%%%%%%%%%%%%%%%%%%%%%%%%%%%%%%%%%%%%%%%%%%%%%%%%%%
\begin{prop}\label{markovpijk}
Let $G$ be a Cayley graph which is not necessary finite and $\{p_{i,j}^k\}_{i,j,k\in
I(G)}$ the constants giving the structure of the pre-hypergroup $H(G)$ derived from $G$. For $i,j\in
I(G)$ and $n\in\mathbb{N}$, we have $\mathbb{P}\left(Z_{2}=j ~\bigg{|}~
Z_1=i\right)=\sum_{k\in
I(G)}p_{i,k}^j\alpha_k|S_k|.$ When $G$ is finite and has the uniform distribution, we have $|S_j|=\sum_{k=0}^{M(G)}p_{i,k}^j|S_k|$. 
\end{prop}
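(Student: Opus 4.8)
The plan is to reduce both identities to Proposition~\ref{relationCayley} together with the independence of the increments $X_1,X_2$, supplemented in the second half by the transition matrix computed in Section~\ref{Time evolution of distances obtained from random walks}.

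First I would treat the general (possibly infinite) case. Since the $\alpha_i$ are assumed positive, $\mathbb{P}(Z_1=i)=\mathbb{P}(|X_1|=i)=\alpha_i|S_i|>0$, so the conditional probability is well defined and
\[
\mathbb{P}(Z_2=j\mid Z_1=i)=\frac{\mathbb{P}(Z_2=j,\ Z_1=i)}{\alpha_i|S_i|}.
\]
I would then expand the numerator by the law of total probability over the value of $|X_2|$, writing $\{Z_2=j,\ Z_1=i\}$ as the disjoint union over $k\in I(G)$ of the events $\{|X_1X_2|=j,\ |X_1|=i,\ |X_2|=k\}$. Each such event factors as $\mathbb{P}(|X_1X_2|=j\mid |X_1|=i,\ |X_2|=k)\,\mathbb{P}(|X_1|=i,\ |X_2|=k)$; by Proposition~\ref{relationCayley} (with the roles of the two lower indices exchanged) the first factor equals $p_{i,k}^{j}$, and since $X_1$ and $X_2$ are independent the second factor equals $\alpha_i|S_i|\cdot\alpha_k|S_k|$. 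Summing over $k$ (a convergent series of non-negative terms by countable additivity, in fact a finite sum by the triangle inequality constraining $|j-i|\le k\le i+j$) and cancelling the common factor $\alpha_i|S_i|$ yields $\mathbb{P}(Z_2=j\mid Z_1=i)=\sum_{k\in I(G)}p_{i,k}^{j}\alpha_k|S_k|$.

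For the second identity I would specialize to $G$ finite with the uniform distribution $\alpha_k=1/|G|$. In this case Section~\ref{Time evolution of distances obtained from random walks} shows that $\{Z_n\}$ is a Markov chain whose transition probabilities $p_{ij}=|S_j|/|G|$ depend only on $j$, so in particular $\mathbb{P}(Z_2=j\mid Z_1=i)=|S_j|/|G|$. Substituting $\alpha_k=1/|G|$ into the first identity gives $|S_j|/|G|=\tfrac{1}{|G|}\sum_{k=0}^{M(G)}p_{i,k}^{j}|S_k|$, and multiplying through by $|G|$ gives the claim.

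The argument is essentially bookkeeping, so there is no serious obstacle; the only points requiring care are checking that every conditioning event has positive probability (which is exactly where positivity of the $\alpha_i$ enters, and which guarantees each factor above is legitimate) and invoking Proposition~\ref{relationCayley} with the two lower indices in the correct order, since that statement conditions on $|X_1|=i,\ |X_2|=k$ while asking for $Z_2=j$, i.e. it produces $p_{i,k}^{j}$ rather than $p_{i,j}^{k}$.
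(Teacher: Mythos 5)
Your proof is correct, and since the paper dismisses this proposition with ``The proof is straightforward,'' there is no written argument to diverge from: your computation --- decomposing $\{Z_2=j,\ Z_1=i\}$ over the value of $|X_2|$, applying Proposition~\ref{relationCayley} with lower indices $(i,k)$, using independence of $X_1,X_2$, and cancelling $\alpha_i|S_i|$ --- is exactly the intended one-step conditioning, and the finite uniform case follows as you say from $\mathbb{P}(Z_2=j\mid Z_1=i)=|S_j|/|G|$ established in Section~\ref{Time evolution of distances obtained from random walks}. Your care about positivity of the $\alpha_i$ (a standing assumption in that section) and about the order of the lower indices is well placed.
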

\begin{proof}
The proof is straightforward.
\end{proof}

\section{Transition matrices associated with hypergroups derived from graphs}\label{Realization of hypergroups as operator algebras}
In this section, we will identify the hypergroup derived from a pointed graph with a commutative algebra  whose basis is transition matrices given by products in the hypergroup, and estimate the operator norms of the transition matrices. We will clarify a relationship between random walks and products of the transition matrices.

A finite hypergroup induces a finite dimensional unital algebla which can be identified with a matrix algebra.
This fact is true for the discrete infinite case.
In other words, for a not necessary finite discrete hypergroup $H=\{x_i\}_{i \in I(H)}$ with a structure constant $\{p_{i,j}^k\}_{i,j,k\in I(H)}$, 
a family $\mathcal{P}_H=\{P_k\}_{k\in I(H)}$ of transition matrices $P_k = (p_{k,i}^j)_{i,j \in I (H)}$ satisfies that $P_iP_j=\sum_{k\in
I(H)}p_{i,j}^kP_k$ for all $i,j \in I(H)$.

\begin{rema}
The associativity of the hypergroup $H(\Gamma,v_0)$ derived from a pointed graph $(\Gamma,v_0)$ is characterized by the commutativity of transition matrices $\mathcal{P}_{H(\Gamma,v_0)}$. That is, the commutative pre-hypergroup $H(\Gamma,v_0)$ forms a hypergroup if and only if all of transition probability matrices in $\mathcal{P}_{H(\Gamma,v_0)}=\{P_k\}_{k\in
I(\Gamma,v_0)}$ mutually commute.
\end{rema}
%%%%%%%%%%%%%%%%%%%%%%%%%%%%%%%%%%%%%%%%%%%%%%%%%%%%%%%%%%%%%%%%%%%%%%%%%%
We can also define the transition matrices $\mathcal{P}_H$ from a pre-hypergroup $H$ by the same way. Then, matrices in $\mathcal{P}_H$ can be regarded as linear operators on the Hilbert space $\ell^2(H):=\{(\xi_n)_{n\in
I(H)}\mid\xi_n\in\mathbb{C},\ \sum_{n\in
I(H)}|\xi_n|^2<\infty\}$ as follows: for $k\in
I(H)$, we define an operator, which is denote by $P_k$ too, on $\ell^2(H)$ by $P_k(\xi)_n
=\sum_{l\in
I(H)}p_{k,l}^n\xi_l$ for $\xi=(\xi_n)\in\ell^2(H)$ with $\sum_{n\in
I(H)}|\sum_{l\in
I(\Gamma)}p_{k,l}^n\xi_l|^2<\infty$.
The actions can be regarded as the matrix products of row vectors in $\ell^2(H)$ and matrix $P_k$'s.

If $\Gamma=G$ is a finite Cayley graph with the uniform distribution, then Proposition \ref{markovpijk} implies that the distribution $\pi_G$ defined as \eqref{s.d.} is a stationary distribution of all transition matrices in $\mathcal{P}_{H(G)}$. 
However, in the infinite case, $P_k$ does not always have a stationary distribution. Indeed, the transition matrix $P_1$ associated with the hypergroup $H(\mathbb{Z})$ in Example \ref{freegroup} has no stationary distribution. For the irreducibility, 
there are the case in which a transition matrix $P_k$ associated with a hypergroup derived from a graph is irreducible and the case in which it is reducible as follows.
%%%%%%%%%%%%%%%%%%%%%%%%%%%%%%%%%%%%%%%%%%%%%%%%%%%%%%%%%%%%%%%%%%%%%%%%%%%%%%%%%
\begin{exam}
Let $P_1,P_2$ be the transition matrices associated with the hypergroup $H(\mathcal{C}_4)$ derived from the $4$-cycle graph $\mathcal{C}_4={\rm
Cay}(\mathbb{Z}/4\mathbb{Z},\{\pm1\})$.
Then $P_1$ is irreducible and $P_2$ is reducible.
\end{exam}
%%%%%%%%%%%%%%%%%%%%%%%%%%%%%%%%%%%%%%%%%%%%%%%%%%%%%%%%%%%%%%%%%%%%%%%%%%%

Now, we shall estimate the operator norm of $P_k$ associated with the pre-hypergroup derived from a pointed graph. We define sets
\begin{align*}
&{\rm
Supp}(k)=\{(i,j)\in
I(\Gamma)^2\mid
p_{k,i}^j \neq0\}\\
&{\rm
Supp}_i(k)=\{j\in
I(\Gamma)\mid
p_{k,i}^j \neq0\}\subset\{|i-k|,|i-k|+1,\ldots,i+k\},\\
&{\rm
Supp}^j(k)=\{i\in
I(\Gamma)\mid
p_{k,i}^j \neq0\}\subset\{|j-k|,|j-k|+1,\ldots,j+k\}.
\end{align*}
Then, we have the following theorem.
%%%%%%%%%%%%%%%%%%%%%%%%%%%%%%%%%%%%%%%%%%%%%
\begin{theo}\label{norm}
Let $\Gamma$ be a pointed graph. For all $k\in
I(\Gamma)$, the operator $P_k$ is a bounded operator on $\ell^2(H(\Gamma))$. Moreover, if we define constants $c_k=\sup_{j\in
I(\Gamma)}\sum_{i\in{\rm
Supp}^j(k)}(p_{k,i}^j)^2$ and $d_k=\sup_{i\in
I(\Gamma)}|{\rm
Supp}_i(k)|$ the operator norm of $P_k$ is estimated as $1\leq\|P_k\|\leq
\sqrt{c_kd_k}$.
\end{theo}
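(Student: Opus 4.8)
The plan is to treat $P_k$ as acting by a (non-symmetric) matrix kernel and to run a Schur-test style Cauchy--Schwarz estimate for the upper bound, while the lower bound comes from a single explicit test vector. I would prove the two inequalities separately, establishing boundedness along the way.

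For the upper bound, I would first fix a finitely supported vector $\xi=(\xi_l)\in\ell^2(H(\Gamma))$ (these are dense, and for such $\xi$ the image $P_k\xi$ is again finitely supported, hence lies in the natural domain of $P_k$). Using the action formula $(P_k\xi)_n=\sum_l p_{k,l}^n\xi_l$, the inner sum runs only over $l\in{\rm Supp}^n(k)$, so Cauchy--Schwarz on this finite sum gives
\[
|(P_k\xi)_n|^2\le\Big(\sum_{l\in{\rm Supp}^n(k)}(p_{k,l}^n)^2\Big)\Big(\sum_{l\in{\rm Supp}^n(k)}|\xi_l|^2\Big)\le c_k\sum_{l\in{\rm Supp}^n(k)}|\xi_l|^2,
\]
where the last step is exactly the definition of $c_k$. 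Summing over $n$ and interchanging the order of summation, I would use the symmetry ${\rm Supp}^n(k)\ni l \Leftrightarrow {\rm Supp}_l(k)\ni n$, so that each $|\xi_l|^2$ is counted precisely $|{\rm Supp}_l(k)|\le d_k$ times:
\[
\|P_k\xi\|^2\le c_k\sum_n\sum_{l\in{\rm Supp}^n(k)}|\xi_l|^2=c_k\sum_l|{\rm Supp}_l(k)|\,|\xi_l|^2\le c_kd_k\|\xi\|^2.
\]
The finiteness of $c_k$ and $d_k$ that makes this nonvacuous follows from the support containments stated just before the theorem: $|{\rm Supp}_i(k)|\le 2\min(i,k)+1\le 2k+1$, and likewise each column index set ${\rm Supp}^j(k)$ has at most $2k+1$ elements, with every $p_{k,i}^j\in[0,1]$. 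Thus $P_k$ is bounded on the dense subspace of finitely supported vectors with $\|P_k\|\le\sqrt{c_kd_k}$, and extends by density to all of $\ell^2(H(\Gamma))$ with the same norm bound.

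For the lower bound I would produce a unit vector whose image has norm $1$. Since $x_0$ is the unit of the (pre-)hypergroup, $x_k\circ x_0=x_k$ forces $p_{k,0}^n=\delta_{k,n}$. Taking $\xi=e_0$, the standard basis vector at index $0$, therefore gives $(P_ke_0)_n=p_{k,0}^n=\delta_{k,n}$, i.e. $P_ke_0=e_k$, whence $\|P_ke_0\|=1=\|e_0\|$ and $\|P_k\|\ge1$.

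I expect no serious obstacle here: the estimate is a routine Schur test. The only points requiring genuine care are the justification that $c_k$ and $d_k$ are finite (obtained from the support bounds, without which boundedness is not automatic in the infinite case) and the bookkeeping in the interchange of summation, where converting the column-wise Cauchy--Schwarz factor $c_k$ into the row-wise counting factor $d_k$ relies on the duality between ${\rm Supp}^n(k)$ and ${\rm Supp}_l(k)$.
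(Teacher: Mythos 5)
Your proposal is correct and follows essentially the same route as the paper: the same Cauchy--Schwarz estimate over ${\rm Supp}^j(k)$ giving the factor $c_k$, the same interchange of summation via the duality $i\in{\rm Supp}^j(k)\Leftrightarrow j\in{\rm Supp}_i(k)$ giving the factor $d_k$, and the same test vector $(1,0,0,\ldots)$ for the lower bound. The only additions are cosmetic but welcome: you make explicit the finiteness of $c_k$ and $d_k$ from the support containments and the density/extension step, both of which the paper leaves implicit.
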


\begin{proof}
For every $\xi=(\xi_n)\in\ell^2(H(\Gamma))$, we have
\begin{align}
&\|P_k(\xi)\|^2=\sum_{j\in
I(\Gamma)}\left|\sum_{i\in
I(\Gamma)}p_{k,i}^j\xi_i\right|^2=\sum_{j\in
I(\Gamma)}\left|\sum_{i\in{\rm
Supp}^j(k)}p_{k,i}^j\xi_i\right|^2\label{inne}\\
&\leq\sum_{j\in
I(\Gamma)}\left(\sum_{i\in{\rm
Supp}^j(k)}(p_{k,i}^j)^2\right)\left(\sum_{i\in{\rm
Supp}^j(k)}|\xi_i|^2\right)\leq
c_k\sum_{j\in
I(\Gamma)}\sum_{i\in{\rm
Supp}^j(k)}|\xi_i|^2.\nonumber
\end{align}
Interchanging of the order of the above sum, we have
\begin{align*}
&\sum_{j\in
I(\Gamma)}\sum_{i\in{\rm
Supp}^j(k)}|\xi_i|^2=\sum_{(i,j)\in{\rm
Supp}(k)}|\xi_i|^2=\sum_{i\in
I(\Gamma)}\sum_{j\in{\rm
Supp}_i(k)}|\xi_i|^2\leq
d_k\sum_{i\in
I(\Gamma)}|\xi_i|^2.
\end{align*}
By \eqref{inne}, the inequality $\|P_k\|\leq
\sqrt{c_kd_k}$ has been proved. Also, if $\xi=(1,0,\ldots)\in\ell^2(H(\Gamma))$, we have $\|P_k(\xi)\|=1$, and hence $\|P_k\|\geq1$ for all $k\in
I(H)$.
\end{proof}

\begin{exam}
The transition matrix $P_1$ associated with the hypergroup $H(\mathbb{Z})$ has the norm greater than $1$.
Indeed, taking $\xi_n = \frac{1}{2^n}$ for each $n \in \mathbb{N}_0$,
the vector $\xi = (\xi_n) \in \ell^2(H(\mathbb{Z})) $ has the norm $\frac{2}{\sqrt{3}}$ and $ \| P_1(\xi) \| = \sqrt{2}$, 
and hence we have $  \| P_1 \| \geq \sqrt{ \frac{3}{2} }>1 $.
\end{exam}
%%%%%%%%%%%%%%%%%%%%%%%%%%%%%%%%%%%%%%%%%%%%%%%%%%%%%%%%%%%%%%%%%%%%%%%%%%%%%%%%
We shall discuss the uniformly boundedness for the operators $P_k\ (k\in
I(\Gamma))$ for a pointed infinite graph $\Gamma$ as the following corollary. (Obviously, when $\Gamma$ is a finite graph the set $\{\|P_k\|\}_{k\in
I(\Gamma)}$ is bounded.)

%%%%%%%%%%%%%%%%%%%%%%%%%%%%%%%%%%%%%%%%%%%%%%%%%%%%%%%%%%%%%%%%%%%%%%%%%%%%%%%%
\begin{coro}\label{uniformb}
Let $\Gamma$ be an infinite graph with a vertex set $V$ and $v_0\in
V$ a base point. If $S(\Gamma):=\sup_{v\in
V}\sup_{k\in
I(\Gamma)}|S_k(v)|<\infty$ then we have $\|P_k\|\leq
S(\Gamma)^2$.
\end{coro}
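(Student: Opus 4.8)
The plan is to combine Theorem \ref{norm}, which gives $\|P_k\|\le\sqrt{c_kd_k}$, with two elementary estimates on the structure constants $p_{k,i}^j$, each phrased in terms of $S(\Gamma)$. The quantities to control are $d_k=\sup_{i}|\mathrm{Supp}_i(k)|$ and $c_k=\sup_{j}\sum_{i\in\mathrm{Supp}^j(k)}(p_{k,i}^j)^2$, and the goal is to show $d_k\le S(\Gamma)^2$ and $c_k\le S(\Gamma)^2$.

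First I would bound $d_k$. The key observation is that a \emph{nonzero} structure constant cannot be arbitrarily small. From \eqref{pijk} we have $p_{k,i}^j=\frac{1}{|S_k(v_0)|}\sum_{v\in S_k(v_0)}\frac{|S_i(v)\cap S_j(v_0)|}{|S_i(v)|}$; if this is nonzero then some summand is positive, so $|S_i(v)\cap S_j(v_0)|\ge1$ for that $v$, and since $|S_i(v)|\le S(\Gamma)$ and $|S_k(v_0)|\le S(\Gamma)$ one gets $p_{k,i}^j\ge S(\Gamma)^{-2}$. Because $\{p_{k,i}^j\}_j$ is a probability distribution, $\sum_j p_{k,i}^j=1$, and every nonzero term is at least $S(\Gamma)^{-2}$, the support $\mathrm{Supp}_i(k)$ contains at most $S(\Gamma)^2$ indices; taking the supremum over $i$ gives $d_k\le S(\Gamma)^2$.

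Next I would bound $c_k$. Fix $j$, sum over $i$, and interchange the order of summation. For each fixed $v\in S_k(v_0)$ the sets $S_i(v)\cap S_j(v_0)$ are pairwise disjoint in $i$ with union $S_j(v_0)$, so $\sum_i|S_i(v)\cap S_j(v_0)|=|S_j(v_0)|\le S(\Gamma)$; using $|S_i(v)|\ge1$ on the nonvanishing terms this yields $\sum_i p_{k,i}^j\le S(\Gamma)$. Since all $p_{k,i}^j$ are nonnegative, $\sum_i(p_{k,i}^j)^2\le\bigl(\sum_i p_{k,i}^j\bigr)^2\le S(\Gamma)^2$, whence $c_k\le S(\Gamma)^2$. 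Feeding both estimates into Theorem \ref{norm} gives $\|P_k\|\le\sqrt{c_kd_k}\le\sqrt{S(\Gamma)^2\cdot S(\Gamma)^2}=S(\Gamma)^2$, as claimed.

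The main obstacle, and really the one genuine idea, is the uniform lower bound $p_{k,i}^j\ge S(\Gamma)^{-2}$ on nonzero constants: it is what converts the normalization $\sum_j p_{k,i}^j=1$ into a cardinality bound on the support, and it crucially uses that the finiteness of $S(\Gamma)$ controls $|S_k(v_0)|$ and $|S_i(v)|$ simultaneously. The remaining estimates are routine rearrangements of \eqref{pijk}.
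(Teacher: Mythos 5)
Your proof is correct, but it takes a genuinely different route from the paper's. The paper bounds $c_k$ and $d_k$ purely combinatorially: it observes that every element of ${\rm Supp}^j(k)$ arises as a distance $d(v,w)$ with $(v,w)\in S_k(v_0)\times S_j(v_0)$, and every element of ${\rm Supp}_i(k)$ as $|w|$ with $w\in S_i(v)$ for some $v\in S_k(v_0)$, so both supports have at most $S(\Gamma)^2$ elements; the bound $c_k\le S(\Gamma)^2$ then follows trivially from $(p_{k,i}^j)^2\le 1$. You instead exploit the probabilistic structure: the uniform lower bound $p_{k,i}^j\ge S(\Gamma)^{-2}$ on nonzero constants converts the row normalization $\sum_j p_{k,i}^j=1$ into the cardinality bound on ${\rm Supp}_i(k)$, and the column-sum estimate $\sum_i p_{k,i}^j\le|S_j(v_0)|\le S(\Gamma)$ together with $\sum_i a_i^2\le(\sum_i a_i)^2$ handles $c_k$. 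Both arguments are valid and land on the same constant; the paper's is shorter and needs no lower bound on the $p_{k,i}^j$, while yours produces strictly more information along the way. In fact your two intermediate facts --- row sums equal to $1$ and column sums at most $S(\Gamma)$ --- would give $\|P_k\|\le\sqrt{S(\Gamma)}$ directly by the Schur test, which is sharper than the stated $S(\Gamma)^2$; you stop short of this only because you feed the estimates back through Theorem \ref{norm}.
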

\begin{proof}
It is enough to show that $c_k,d_k\leq
S(\Gamma)^2$ by Theorem \ref{norm}. For $i,j,k\in
I(\Gamma)$, we have
\begin{align*}
&{\rm
Supp}^j(k)=\hspace{-2pt}\bigcup_{v\in
S_k(v_0)}\{l\in\mathbb{N}_0\mid
S_l(v)\cup
S_j(v_0)\neq\varnothing\}=\hspace{-2pt}\bigcup_{v\in
S_k(v_0)}\bigcup_{w\in
S_j(v_0)}\{l\in\mathbb{N}_0\mid
w\in
S_{l}(v)\},\\
&{\rm
Supp}_i(k)=\hspace{-2pt}\bigcup_{v\in
S_k(v_0)}\{m\in\mathbb{N}_0\mid
S_i(v)\cup
S_m(v_0)\neq\varnothing\}=\hspace{-2pt}\bigcup_{v\in
S_k(v_0)}\bigcup_{w\in
S_i(v)}\{m\in\mathbb{N}_0\mid
w\in
S_m(v_0)\}
\end{align*}
by the definition of $p_{k,i}^j$. These imply that $|{\rm
Supp}^j(k)|\leq\sum_{v\in
S_k(v_0)}\sum_{w\in
S_j(v_0)}1=S(\Gamma)^2$ and $|{\rm
Supp}_i(k)|\leq\sum_{v\in
S_k(v_0)}|S_i(v)|\leq
S(\Gamma)^2$, and hence we have $c_k,d_k\leq
S(\Gamma)^2$.
\end{proof}
%%%%%%%%%%%%%%%%%%%%%%%%%%%%%%%%%%%%%%%%%%%%%%%%%%%%%%%%%%%%%%%%%%%%%%%%%%%%%%%

\begin{exam}
It is easy to check that the $1$-dimensional lattice ${\rm
Cay}(\mathbb{Z},\{\pm1\})$ and the infinite ladder graph $\mathcal{L}={\rm
Cay}(\mathbb{Z}\oplus(\mathbb{Z}/2\mathbb{Z}),\{(\pm1,0),(0,1)\})$ satisfy the assumption of Corollary \ref{uniformb} and $S(\mathbb{Z})=2,\ S(\mathcal{L})=4$. 
\end{exam}
%%%%%%%%%%%%%%%%%%%%%%%%%%%%%%%%%%%%%%%%%%%%%%%%%%%%%%%%%%%%%%%%%%%%%%%%%%%%%%%
%%%%%%%%%%%%%%%%%%%%%%%%%%%%%%%%%%%%%%%%%%%%%%%%%%%%%%%%%%%%%%%%%%%%%%%%%%%
\begin{coro}\label{maincoro}
Let $(\Gamma,v_0)$ be a pointed graph producing a hypergroup $H(\Gamma)$ and $\mathcal{P}_{H(\Gamma)}$ the transition matrices associated with the hypergroup $H(\Gamma)$. If $\Gamma$ satisfies the conditions $(S1)$ and $(S2)$ then we have $(P_{i_1}P_{i_2}\cdots
P_{i_m})_{i,j}=\sum_{k\in
I(G)}\tilde{p}_{i_1,i_2,\ldots,i_m}^kp_{k,i}^j$ for all $m\in\mathbb{N}$ and $i,j,i_1,\ldots,i_m\in
I(\Gamma)$.
\end{coro}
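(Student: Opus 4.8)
The plan is to realize the iterated matrix product as the image of the left-associated hypergroup product under the canonical matrix representation, and then to transport the identity $PL=J$ of Theorem \ref{1-2} through that representation before reading off a single matrix entry.

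First I would record that, by the defining relation $P_iP_j=\sum_{k\in I(\Gamma)}p_{i,j}^kP_k$ recalled at the beginning of this section, the linear extension $\Phi$ of the assignment $x_i\mapsto P_i$ satisfies $\Phi(x_i\circ x_j)=\Phi(x_i)\Phi(x_j)$ on basis elements; by bilinearity of both the hypergroup product and matrix multiplication, $\Phi$ is therefore an algebra homomorphism. Since $\Gamma$ produces a hypergroup, $H(\Gamma)$ is associative, so the left-associated product $PL(i_1,\ldots,i_m)$ is well-defined, and since each $P_k$ is a bounded operator on $\ell^2(H(\Gamma))$ by Theorem \ref{norm}, the iterated matrix product is unambiguous. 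Applying $\Phi$ to $PL(i_1,\ldots,i_m)$ one factor at a time then yields
\[
\Phi\big(PL(i_1,\ldots,i_m)\big)=(( \cdots ((P_{i_1}P_{i_2})P_{i_3})\cdots)P_{i_{m-1}})P_{i_m}=P_{i_1}P_{i_2}\cdots P_{i_m}.
\]

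Second, because $\Gamma$ satisfies the conditions $(S1)$ and $(S2)$, Theorem \ref{1-2} gives $PL(i_1,\ldots,i_m)=J(i_1,\ldots,i_m)$. By the definition \eqref{eqn:EL} of $J$ together with Remark \ref{probrema}, the expansion of $J(i_1,\ldots,i_m)$ in the basis $\{x_k\}_{k\in I(\Gamma)}$ has coefficients $\tilde{p}_{i_1,\ldots,i_m}^k$, the sum being finite since only finitely many structure constants are nonzero for fixed $i_1,\ldots,i_m$. Applying the homomorphism $\Phi$ and using the previous display, I obtain
\[
P_{i_1}P_{i_2}\cdots P_{i_m}=\sum_{k\in I(\Gamma)}\tilde{p}_{i_1,\ldots,i_m}^k P_k.
\]

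Finally, I would read off the $(i,j)$-entry of both sides: since $P_k=(p_{k,i}^j)_{i,j\in I(\Gamma)}$ has $(i,j)$-entry $p_{k,i}^j$, linearity gives $(P_{i_1}\cdots P_{i_m})_{i,j}=\sum_{k\in I(\Gamma)}\tilde{p}_{i_1,\ldots,i_m}^k\,p_{k,i}^j$, which is the asserted formula. The only step requiring genuine care, and thus the main obstacle, is the first one: one must justify that the step-by-step passage from the hypergroup product to the iterated matrix product is legitimate, which rests on $\Phi$ being an honest algebra homomorphism (hence on the associativity guaranteed by the hypothesis that $\Gamma$ produces a hypergroup) and on the boundedness from Theorem \ref{norm} that makes the infinite matrix products well-defined. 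Everything after that is a direct substitution via Theorem \ref{1-2}.
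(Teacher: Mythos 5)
Your argument is correct and is essentially the proof the paper leaves implicit: the relation $P_iP_j=\sum_k p_{i,j}^kP_k$ makes $x_i\mapsto P_i$ an algebra homomorphism (legitimate here since producing a hypergroup gives associativity and commutativity), and applying it to the identity $PL(i_1,\ldots,i_m)=J(i_1,\ldots,i_m)=\sum_k\tilde{p}_{i_1,\ldots,i_m}^kx_k$ from Theorem \ref{1-2} and reading off the $(i,j)$-entry yields the formula. All sums involved are finite by the local finiteness of the structure constants, so no additional analytic care is needed.
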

%%%%%%%%%%%%%%%%%%%%%%%%%%%%%%%%%%%%%%%%%%%%%%%%%%%%%%%%%%%%%%%%%%%%%%%%%%%%

Under the assumption of the previous corollary, the $k$-th coefficient in $P_{i_m}\cdots
P_{i_1}\xi^0$ coincides with $\tilde{p}_{i_1,\ldots,i_m}^k$, where $\xi^0=(1,0,0,\ldots)$. In other words, a matrix product of $P_k$'s describes a distribution of distances between and the base point and a vertex to which a random walker reaches from the base point by some steps (see Remark \ref{probrema} (2)).

\subsection*{Acknowledgements}
The authors would like to express gratitude to Satoshi Kawakami for his helpful comments. 
They also would like to thank Tomohiro Ikkai, Yuta Suzuki, Fujie Futaba for their useful discussion and interest.
This work was supported by JSPS KAKENHI Grant-in-Aid for Research Activity Start-up (No. 19K23403).

\end{document}